\newtheorem{theorem}{Theorem}[section]
\newtheorem{thm}[theorem]{Theorem}
\newtheorem{cor}[theorem]{Corollary}
\newtheorem{lemm}[theorem]{Lemma}
\newtheorem{prop}[theorem]{Proposition}
\theoremstyle{definition}
\newtheorem{definition-theorem}[theorem]{Definition-Theorem}
\newtheorem{defi}[theorem]{Definition}
\newtheorem{remk}[theorem]{Remark}
\newtheorem{exam}[theorem]{Example}
\newcommand{\twosilt}{\mbox{\rm 2-silt}\hspace{.01in}}
\newcommand{\twotilt}{\mbox{\rm 2-tilt}\hspace{.01in}}
\newcommand{\DD}{\mathsf{D}}
\newcommand{\KKb}{\mathsf{K}^{\rm b}}
\newcommand{\CCb}{\mathsf{C}^{\rm b}}
\newcommand{\K}{\mathsf{K}}
\newcommand{\Aut}{\operatorname{Aut}\nolimits}
\newcommand{\La}{\Lambda}
\newcommand{\Ga}{\Gamma}
\newcommand{\stilt}{\mbox{\rm s$\tau$-tilt}\hspace{.01in}}
\newcommand{\add}{\mathsf{add}\nolimits}
\newcommand{\proj}{\mathsf{proj}}
\newcommand{\id}{\operatorname{id}\nolimits}
\newcommand{\Hom}{\operatorname{Hom}\nolimits}
\newcommand{\Endm}{\operatorname{End}\nolimits}
\newcommand{\Ext}{\operatorname{Ext}\nolimits}
\newcommand{\rad}{\operatorname{rad}\nolimits}
\def\Im{\mathop{\mathrm{Im}}\nolimits}
\def\Ker{\mathop{\mathrm{Ker}}\nolimits}
\def\mod{\mathsf{mod}}
\def\Mod{\mathsf{Mod}}
\def\fp{\mathsf{fp}}
\def\id{\mathrm{id}}
\def\RHom{\mathop{\mathbf R\mathrm{Hom}}\nolimits}
\newcommand{\Lotimes}{\mathop{{\otimes}^\mathbf{L}_\Lambda}\nolimits}
\newcommand{\fm}{\mathfrak{m}}
\newcommand{\cC}{\mathcal{C}}
\def\add{{\mathsf{add}}}
\def\op{\mathop{\mathrm{op}}\nolimits}
\newcommand{\xto}[1]{\xrightarrow{#1}}
\newcommand{\EE}{\mathcal{E}}
\begin{document}
\title{Two-term tilting complexes for preprojective algebras of non-Dynkin type}
\author{Yuta Kimura}
\address{Fakult\"{a}t f\"{u}r Mathematik, Universit\"{a}t Bielefeld, 33501 Bielefeld, Germany} 
\email{ykimura@math.uni-bielefeld.de}

\author{Yuya Mizuno}
\address{Faculty of Liberal Arts and Sciences, 	Osaka Prefecture University,
  1-1 Gakuen-cho, Naka-ku, Sakai, Osaka 599-8531, Japan}
\email{yuya.mizuno@las.osakafu-u.ac.jp}
\thanks{The first-named author is supported by the Alexander von Humboldt Stiftung/Foundation in the framework of the Alexander von Humboldt Professorship endowed by the Federal Ministry of Education and Research.
}

\thanks{}

\begin{abstract}
In this paper, we study two-term tilting complexes for preprojective algebras of non-Dynkin type.
We show that there exist two families of two-term tilting complexes, which are respectively parameterized by the elements of the corresponding Coxeter group. 
Moreover, we provide the complete classification in the case of affine type by showing that any two-term silting complex 
belongs one of them. For this purpose, we also discuss the Krull-Schmidt property for the homotopy category of finitely generated projective modules over a complete ring.
\end{abstract}
\maketitle


\section{Introduction}

Let $\Delta$ be a finite graph without loops, $W$ the Coxeter group of $\Delta$ and 
$\La$ the preprojective algebra of $\Delta$. 
Recently, a strong connection between the representation theory of $\La$ and  $W$ has been discovered, and this link allows us to study the category of $\La$-modules 
in terms of combinatorics of $W$. 
To explain this relationship more precisely, we give the following notations. 

Let $I_i:=\La(1-e_i)\La$ be the two-sided ideal of $\La$, where $e_i$ is the idempotent corresponding to $i\in\Delta_0$, and 
$\langle I_1,\ldots,I_n\rangle:=\{I_{i_1}\cdots I_{i_l}\ |\  l\geq 0, i_1,\ldots,i_l\in \Delta_0\}$. 
Then, by \cite{BIRS,IR1}, we have a bijection 
$$W\ni w=s_{i_1}\cdots s_{i_l} \mapsto I_w:=I_{i_1}\cdots I_{i_l}\in \langle I_1,\ldots,I_n\rangle,$$ 
where $s_{i_1}\cdots s_{i_l}$ is a reduced expression of $w$. 
The object $I_w$ plays a quite important role not only in the study of the category of $\La$-modules but also from the viewpoint of categorification of cluster algebras, for example \cite{AM,AIRT,BIRS,GLS,IRRT,Ki1,Ki2,L,M1,M2,SY}.  
Among others,  the situation is particularly nice if $\Delta$ is a Dynkin graph. 
In this case, $\langle I_1,\ldots,I_n\rangle$ can be identified with the set $\stilt\La$ of support $\tau$-tilting $\La$-modules, and the above map provides a bijection 
$W\to\stilt\La$ \cite{M1}. 
Then 
this map induces a poset isomorphism (defined by the weak order) 
and an action of simple generators of $W$ can be interpreted as \emph{mutation} of $\stilt\La$. 
This fact, together with general properties of support $\tau$-tilting modules, yield a comprehensive study of various important objects in the category such as torsion classes, silting complexes and so on (we refer to \cite{AIR,BY,IR2} for the background of $\tau$-tilting theory). 

One of the main motivation of this paper is to generalize this result to non-Dynkin cases.
Namely, we study two-term tilting and silting complexes of preprojective algebras of non-Dynkin type.
We note that Auslander-Reiten duality $\tau$ is defined for finite dimensional algebras, while preprojective algebras of non-Dynkin type are infinite dimensional.
Therefore, it is reasonable to study the set of two-term silting complexes,  
which is naturally in bijection with the set of support $\tau$-tilting modules if an algebra is finite dimensional.

In \cite{IR1,BIRS}, it is shown that $I_w$ is a (classical) tilting module if $\Delta$ is non-Dynkin. 
In this paper, we improve this result from the viewpoint of $\tau$-tilting theory and silting theory.
Following \cite{BIRS}, we first provide two initial families of two-term tilting complexes, 
which are respectively parameterized by  the elements of  the corresponding Coxeter group.
Moreover, in the case of affine type, we show that any two-term silting complex belongs one of them.

Our main results are summarized as follows (see Propositions \ref{P_w}, \ref{R_w}, \ref{nointersection} and Theorem \ref{classify} for some notation).

\begin{thm}\label{main}
Let $\Delta$ be a non-Dynkin graph, $\La=\La_\Delta$ the complete preprojective algebra of $\Delta$ and $W=W_\Delta$ the Coxeter group of $\Delta$. 
We denote by $\twotilt\La$ the set of isomorphism classes of basic two-term tilting complexes of $\KKb(\proj\La)$ $($Note that the sets $W$ and $\twotilt\La$ have  natural partial orderings, see section \ref{section-preproj-alg}$)$.
\begin{itemize}
\item[(a)] There are order-reversing injection and order-preserving injection
$$\phi :W\to \twotilt\La,\ \ w\mapsto P_w\ \  \textnormal{and}\ \ \phi^* :W\to \twotilt\La,\ \ w\mapsto R_w,$$ 
and
$\{P_w\}_{w\in W} \bigcap \{R_w\}_{w\in W} = \emptyset$.

\item[(b)]Moreover, assume that $\Delta$ is affine type. Then 
$$\twosilt\La = \{P_w\}_{w\in W} \coprod \{R_w\}_{w\in W},$$
where $\twosilt\La$ is the set of isomorphism classes of basic two-term silting complexes of $\KKb(\proj\La)$.
In particular, any two-term silting complex of $\KKb(\proj\La)$ is a tilting complex.
\end{itemize}
\end{thm}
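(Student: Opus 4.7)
My plan for part (a) is to build $P_w$ from the tilting ideal $I_w$. By \cite{IR1,BIRS}, for non-Dynkin $\Delta$ each $I_w$ is a classical 1-tilting $\La$-module of projective dimension at most one, so it admits a resolution
\begin{equation*}
0 \longrightarrow P_w^{-1} \longrightarrow P_w^0 \longrightarrow I_w \longrightarrow 0
\end{equation*}
in $\proj\La$; I set $P_w := (P_w^{-1} \to P_w^0)\in\KKb(\proj\La)$. The tilting condition $\Hom_{\KKb}(P_w, P_w[i])=0$ for $i\neq 0$ then reduces to $\Ext^i_\La(I_w,I_w)=0$, which is part of the tilting property, and the thick generation follows because $I_w$ is a tilting generator. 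Injectivity of $\phi$ is immediate since $H^0(P_w)=I_w$ together with the known bijection $W \leftrightarrow \br{I_1,\ldots,I_n}$ determines $w$. Order reversal comes from matching the silting order $T \geq T' \Leftrightarrow \Hom(T, T'[1])=0$ with the \emph{reverse} weak order on $W$, via the passage to torsion classes $\Fac\, H^0(-)$ and the containment $I_w \subseteq I_v$ whenever $v \leq w$.

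For $R_w$ I would build a dual family, the natural candidates being (i) applying $\RHom_\La(-,\La)$ together with a shift and the canonical anti-isomorphism $\La \simeq \La^{\op}$ coming from the preprojective involution, or (ii) taking a suitable two-term complex cohomologically concentrated in degree $-1$ realising a dual ideal. This flips the order, making $\phi^*$ order-preserving. Disjointness $\{P_w\}\cap\{R_w\}=\emptyset$ should reduce to a single cohomological separator, for instance $H^0(P_w)=I_w\neq 0$ while $H^0(R_w)=0$, or equivalently the sign of a distinguished coordinate of the $g$-vector.

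For part (b), the affine classification, my approach combines mutation theory of two-term silting complexes with the Krull--Schmidt property for $\KKb(\proj\La)$ over the complete preprojective algebra (which the authors establish earlier). The scheme is: (i) compute all indecomposable mutations of $P_w$ and $R_w$ and show they land again inside $\{P_w\}_{w\in W}\cup\{R_w\}_{w\in W}$; morally $\mu_i^+(P_w) = P_{ws_i}$ when $\ell(ws_i) > \ell(w)$, with the ``outward'' mutations crossing over from the $P$-side to the $R$-side, and symmetrically for $R_w$. (ii) Conclude by starting at $\La = P_e \in \twosilt\La$ and propagating by mutation, once we know the mutation graph of $\twosilt\La$ is connected.

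The main obstacle I expect is step (ii): connectedness of the two-term silting mutation graph is nontrivial for infinite-dimensional algebras, and the affine hypothesis must enter here. I anticipate it will be handled via $g$-vectors: each indecomposable summand of a two-term silting complex should have $g$-vector equal to a real Schur root (or its negative) of the corresponding tame hereditary species associated to $\Delta$, and in the affine case this set of real Schur roots is parametrised precisely by the affine Weyl group $W$, accounting exactly for the two families $\{P_w\}$ and $\{R_w\}$. An alternative route is a direct dichotomy using a cohomological invariant (such as whether $H^0$ vanishes on some specified summand) that partitions $\twosilt\La$ into the two sides, reducing the problem to checking exhaustion within each side separately. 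The ``in particular'' statement that every two-term silting is tilting is then an immediate consequence of part (a).
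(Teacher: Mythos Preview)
Your treatment of part (a) is essentially the paper's: $P_w$ is the minimal projective resolution of the tilting ideal $I_w$, and $R_w$ is obtained by dualising the resolution of $I_w$ as a $\La^{\op}$-module and shifting. One correction: your cohomological separator for disjointness is on the wrong side. In the paper's construction $R_w=\Hom_{\La^{\op}}(Q_w,\La[1])$ sits in degrees $-1,0$ with $H^{-1}(R_w)=\Hom_{\La^{\op}}(I_w,\La)\neq 0$, while $H^{-1}(P_w)=0$ because $P_w$ is a resolution; there is no reason for $H^0(R_w)=\Ext^1_{\La^{\op}}(I_w,\La)$ to vanish.

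For part (b) there is a genuine gap. Your primary strategy---closure under mutation plus connectedness of the two-term silting exchange graph---is not available here: connectedness is open for infinite-dimensional $\La$, and the paper does \emph{not} establish it. Your fallback via $g$-vectors is pointed in the right direction but misses the actual mechanism. The paper's argument is a \emph{density} argument in the real Grothendieck group $V^*=K_0(\La)_{\mathbb R}$:
\begin{enumerate}
\item The cone $C(P_w)$ equals the Weyl chamber $wC_+$ under the contragredient geometric representation $\sigma^*$ (this is the content of $[I_w\Lotimes-]=\sigma_w^*$ from \cite{IR1,BIRS}), and dually $C(R_w)=wC_-$.
\item Because $\Delta$ is affine, the Tits cone $\bigcup_{w}wC_+$ is a closed half-space of $V^*$ (via the affine alcove picture on the hyperplane $\langle\,\cdot\,,V^\perp\rangle=1$); hence $\overline{\bigcup_w wC_+\cup\bigcup_w wC_-}=V^*$.
\item By \cite{DIJ,P,Hi}, distinct basic two-term silting complexes have cones meeting only along their boundaries, and any such cone has nonempty interior since its $g$-vectors form a basis.
\end{enumerate}
Combining (ii) and (iii), the cone of an arbitrary two-term silting $S$ must coincide with some $wC_\pm$, forcing $S\cong P_w$ or $S\cong R_w$. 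Your ``real Schur root'' heuristic does not capture this: the individual $g$-vectors of summands are the extremal rays of a Weyl chamber, not roots, and the argument needs the covering of $V^*$ by chambers, which is exactly where the affine hypothesis enters.
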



For this result, we use the notion of \emph{$g$-vectors} and, using this notion, we define the cone for each  two-term tilting complex.
The chamber structure of these cones plays a very important role in the study of the mutation behavior of two-term tilting complexes and 
several applications have been discovered (for example \cite{A,DIJ,BST,IW2, Y}). 
In particular, density of cones can be regarded as a natural generalization of $\tau$-tilting finiteness and almost all elements in the Grothendieck group can be realized as $g$-vectors of two-term presilting complexes.  
In the case of  the preprojective algebra of non-Dynkin type, we observe that each cone of a two-term tilting complex corresponds to a Weyl chamber of the Coxeter group $W$ (Theorem \ref{birs geometric}).
Then we investigate a chamber structure of them (Proposition \ref{cover}), and together with the technique of $\tau$-tilting theory \cite{DIJ}, we prove the theorem.

Since a preprojective algebra of non-Dynkin type is not finite dimensional, the Krull-Schmidt property is quite non-trivial. 
To discuss the above result, we show that the homotopy category of finitely generated projective modules over a complete ring with a condition (F) has the Krull-Schmidt property, see section \ref{Krull-Schmidt property}.



One of the remarkable property of preprojective algebras of non-Dynkin type is that a family of complexes obtained  from $\La$ by mutation is always tilting. 
Therefore it is hard to find even an example of a silting complex which is not tilting. 
Thus, for a  preprojective algebra $\La$ of non-Dynkin graph, it is natural to ask that any silting complex of $\KKb(\proj\La)$ is tilting or not. 
Though we do not know the answer in general, 
in the case of affine graph, a positive answer was informed us by Osamu Iyama.  
We are grateful to him for agreeing to include his proof as an appendix.
\\

\textbf{Acknowledgements.}
The authors are grateful to Osamu Iyama for useful discussions and allowing us to write his result in the appendix.

\section{Preprojective algebra of non-Dynkin type}
\label{section-preproj-alg}

Throughout this paper, let $K$ be an algebraically closed field. 
Modules mean left modules. 
Let $Q=(Q_{0}, Q_{1}, s, t)$ be a finite acyclic quiver.
The {\it double quiver} $\overline{Q}=(\overline{Q}_{0},\overline{Q}_{1},s,t)$ of $Q$ is defined by $\overline{Q}_{0}=Q_{0}$, $\overline{Q}_{1}=Q_{1}\sqcup \{\alpha^{\ast}: t(\alpha)\to s(\alpha) \mid \alpha \in Q_{1}\}$. 
For two arrows $\alpha$ and $\beta$ of $Q$ such that $s(\alpha)=t(\beta)$, we denote by $\alpha\beta$ the composite of them.

We first recall the notion of preprojective algebras.
 
\begin{defi}\label{Preprojective algebras}
Let $Q$ be a finite acyclic quiver whose underlying graph is $\Delta$.
The {\it preprojective algebra} of $Q$ is defined  as follows 
\begin{align*}
	K\overline{Q}/\langle \displaystyle\sum\limits_{\alpha\in Q_1} \alpha\alpha^{\ast}-\alpha^{\ast}\alpha \rangle . 
	\end{align*}
The \emph{(complete) preprojective algebra} $\La$ of $Q$ is the completion of the preprojective algebra by the arrow ideal.
Since $\La$ is independent of the orientation of $Q$, we write $\La=\La_{\Delta}$.
\end{defi} 

In the rest of this section, 
we fix a non-Dynkin graph $\Delta$ (i.e., $\Delta$ is not type $\mathbb{A},\mathbb{D}$ and $\mathbb{E}$)
and $\La=\La_\Delta$. 
We denote by $\proj\La$  the category of finitely generated projective $\La$-modules and $\KKb(\proj\La)$ the homotopy category  of bounded complexes of $\proj\La$. 
An $\La$-module $M$ is called \emph{finitely presented} if there exists an exact sequence 
$P \to Q \to M \to 0$ with $P, Q\in\proj \La$.
We denote by $\fp \La$ the full subcategory of the category of $\La$-modules consisting of finitely presented modules.

Recall that an additive category is called \emph{Krull-Schmidt} if each object is a finite direct sum of objects such that whose endomorphism algebras are local.
Since the complete preprojective algebra $\La$ is a complete ring satisfying the condition (F) (Example \ref{exam-pseudo-compact-ring} (iii)), we have the Krull-Schmidt property as follows.

\begin{thm}\label{KS-thm}
$\KKb(\proj\La)$ and $\fp \La$ are Krull-Schmidt categories.
\end{thm}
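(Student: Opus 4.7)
My plan is to reduce both statements to the standard criterion: an idempotent-complete additive category is Krull-Schmidt if and only if every object has a semiperfect endomorphism ring. Since $\KKb$ of any idempotent-complete additive category is itself idempotent complete, and $\proj\La$ is idempotent complete as soon as $\La$ is semiperfect, the whole problem reduces to showing that endomorphism rings of objects in $\KKb(\proj\La)$ and in $\fp\La$ are semiperfect.

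The first step is to verify that $\La$ itself is semiperfect. Using that $\La$ is a complete ring with condition (F) (Example \ref{exam-pseudo-compact-ring} (iii)), the general theory of complete semilocal rings developed in Section \ref{Krull-Schmidt property} should yield that $\La/\rad\La$ is semisimple and that idempotents lift modulo $\rad\La$. Hence $\proj\La$ is Krull-Schmidt, and for every $P\in\proj\La$ the ring $\Endm_\La(P)$, being a matrix-type ring over corner rings $e_i\La e_j$, is again a complete ring with condition (F), hence semiperfect by the same reasoning.

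For $X^\bullet \in \KKb(\proj\La)$, set $E := \Endm_{\CCb(\proj\La)}(X^\bullet)$ and let $\HH \subseteq E$ denote the ideal of null-homotopic endomorphisms, so that $\Endm_{\KKb(\proj\La)}(X^\bullet) = E/\HH$. The ring $E$ is cut out as a closed subring of the finite product $\prod_i \Endm_\La(X^i)$ by the chain relations $f^{i+1}d^i = d^i f^i$; it therefore inherits completeness and condition (F), so is semiperfect by the previous step. The key technical point is to show that $\HH$ lies in the Jacobson radical of $E$: given a null-homotopic $g = dh + hd$ with $h$ of degree $-1$, one verifies that $1-g$ is invertible in $E$ by a geometric-series style argument which converges in the inherited topology because $X^\bullet$ is bounded. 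Hence $E/\HH$ inherits semiperfectness from $E$, finishing the argument for $\KKb(\proj\La)$.

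Finally, for $\fp\La$: any $M\in\fp\La$ admits a projective presentation $P^{-1}\to P^0\to M\to 0$, and $\Endm_\La(M)$ can be realized as a quotient of the subring of $\Endm_\La(P^0)\oplus\Endm_\La(P^{-1})$ consisting of pairs commuting with the differential, modulo the ideal of endomorphisms factoring through $P^{-1}$. As this ideal is closed in the product topology and the ambient ring is complete semiperfect, the quotient is again semiperfect. Idempotent completeness of $\fp\La$ follows because idempotent endomorphisms of modules split once $\La$ is semiperfect. The main obstacle I anticipate is the formal setup in Section \ref{Krull-Schmidt property}: one must pin down condition (F) tightly enough that it is stable under passage to closed subrings of finite products and to quotients by closed ideals contained in the radical. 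Once that structural bookkeeping is in place, the rest of the argument is essentially formal.
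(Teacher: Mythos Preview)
Your ``key technical point'' is false: the ideal $\HH$ of null-homotopic endomorphisms is \emph{not} contained in $\rad E$ in general. Take $X^\bullet=(\La\xrightarrow{\id}\La)$. Then the identity chain map is null-homotopic (via $h=\id$), so $1\in\HH$ and $1-1=0$ is not invertible. No geometric-series argument can repair this, because null-homotopic maps are not ``small'' in any adic sense. What \emph{is} true is that $\HH\subset\rad E$ once $X^\bullet$ is a \emph{minimal} complex (all differentials radical maps), but you never reduce to that case. Fortunately this step is unnecessary: any quotient of a semiperfect ring is semiperfect (primitive orthogonal idempotents descend, and a quotient of a local corner ring is local or zero), so once you know $E$ is semiperfect, $E/\HH$ is too. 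You should replace the false claim by this observation.

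There is a second, softer gap: you assert that $E$, as a closed subring of the finite product $\prod_i\Endm_\La(X^i)$, ``inherits completeness and condition (F)''. Completeness of a closed subspace is fine, but (F) requires that $E$ modulo powers of a specified two-sided ideal have finite length \emph{as an $E$-module}; this does not follow formally from the ambient ring having (F). Over the ground field $K$ it does work, since finite $K$-dimension forces finite length over any ring acting $K$-linearly, but you should say so explicitly; as written the bookkeeping you flag as ``the main obstacle'' is genuinely not in place.

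For comparison, the paper avoids both issues. It never analyses $\Endm_{\CCb}(X^\bullet)$ as a topological ring. Instead it encodes a bounded complex as a module over a truncated lower-triangular matrix ring $\Ga_n$ (so objects of $\CCb(\proj\La)$ sit inside $\cC_\La^{\Ga_n}$), and for any $M\in\cC_\La^{\Ga}$ and $f\in\Endm_\Ga(M)$ applies Fitting's lemma to each finite-length quotient $M/\fm^iM$ to get compatible decompositions $I_i\oplus K_i$, then passes to the inverse limit to obtain $M\simeq I\oplus K$ with $f|_I$ an isomorphism and $\id-f|_K$ an isomorphism. This yields Krull--Schmidt for $\CCb(\proj\La)$ directly, and then descends to $\KKb(\proj\La)$ and to $\fp\La$ via full dense functors. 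The advantage is that it works for any complete ring with (F), not just $K$-algebras, and it sidesteps the homotopy ideal entirely.
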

Theorem \ref{KS-thm} follows from the general property of complete rings satisfying the condition (F), which is discussed 
in section \ref{Krull-Schmidt property}.

Next we recall the definitions of tilting modules and tilting complexes.
Recall that an object $X$ of a Krull-Schmidt category is called \emph{basic} if the multiplicity of each indecomposable direct summand of $X$ is one.

\begin{defi}\label{defi}
\begin{itemize}
\item[(a)] 
A $\La$-module $T$ is called a \emph{tilting module} (of projective dimension at most one) if it satisfies
\begin{itemize}
\item[(i)]
there exists an exact sequence $0 \to P_{1} \to P_{0} \to T \to 0$ with a finitely generated projective $\La$-modules $P_{i}$,
\item[(ii)]
$\Ext_{\La}^{1}(T,T)=0$, and
\item[(iii)]
there exists an exact sequence $0 \to \La \to T_{0} \to T_{1} \to 0$ with $T_{0}, T_{1}$ in $\add T$.
\end{itemize}

\item[(b)] 
Let $S, T$ be tilting $\La$-modules.
Then we write $S\geq T$ if $\Ext_{\La}^{1}(S,T)=0$.
We regard the set of isomorphism classes of basic tilting modules as a poset by this order \cite{HU, RS}.

\item[(c)] 
A complex $T\in\KKb(\proj\La)$ is called a \emph{tilting complex} if it satisfies
\begin{itemize}
\item[(i)]
$\Hom_{\KKb(\proj\La)}(T,T[i])=0$ for any $i\neq 0$, and
\item[(ii)]
$T$ generates $\KKb(\proj\La)$.
\end{itemize}

\item[(d)] 
Let $S,T$ be tilting complexes.
We write $S\geq T$ if $\Hom_{\KKb(\proj\La)}(S, T[i])=0$ for any $i>0$.
We can regard the set of isomorphism classes of basic tilting complexes as a poset by this order \cite[Definition 2.12]{AI}.
\end{itemize}
\end{defi}
In this paper, we study two-term tilting complexes. 
Recall that a complex $P=(P^{i}, d^{i})$ in $\KKb(\proj\La)$ is a \emph{two-term} complex if $P^{i}=0$ for all $i\neq -1, 0$. 


The {\it Coxeter group} $W=W_{\Delta}$ of $\Delta$ is the group generated by the set  $\{\,s_{i} \mid i\in \Delta_0\,\}$ with relations $s_{i}^2=1$, $s_{i}s_{j}=s_{j}s_{i}$ if there exist no edge between $i$ and $j$, and $s_{i}s_{j}s_{i}=s_{j}s_{i}s_{j}$ if there exists exactly one edge between $i$ and $j$.
If a word $s_{i_1}\cdots s_{i_l}$ represents an element $w\in W$, then we write $w=s_{i_1}\cdots s_{i_l}$ and say that $s_{i_1}\cdots s_{i_l}$ is an \emph{expression} of $w$.
Let $s_{i_1}\cdots s_{i_l}$ be an expression of $w$. 
If $l$ is minimal among all expressions of $w$, then we say that $s_{i_1}\cdots s_{i_l}$ is a \emph{reduced expression} of $w$, $l$ is called a \emph{length} of $w$ and we write $\ell(w)=l$.

For $v, w\in W$, we write $v\leq w$ if $\ell(v^{-1}w)=\ell(w)-\ell(v)$ holds.
We call this $\leq$ the \emph{(right) weak order} and regard $W$ as a poset by this order.

We denote by $I_i$ the two-sided ideal of $\La$ generated by $1-e_i$, where $e_i$ is the primitive idempotent of $\La$ for $i\in \Delta_0$.  
We denote by $\langle I_1,\ldots,I_n\rangle$ the set of ideals of $\La$ which can be written as 
$I_{i_1}\cdots I_{i_l}$ for some $l\geq 1$ and $i_1,\ldots,i_l\in \Delta_0$, where $I_{\id}:=\La$.
Then the following result was shown.

\begin{theorem}\cite[Theorem III.1.9, 1.13]{BIRS}\label{birs}
\begin{itemize}
\item[(a)] There exists a bijection $W\to\langle I_1,\ldots,I_n\rangle$. 
It is given by $w\mapsto I_w =I_{i_1}\cdots I_{i_l}$ for any reduced 
expression $w=s_{i_1}\cdots s_{i_l}$.
\item[(b)] $I_w$ is a tilting $\La$-module and a tilting $\La^{op}$-module for any $w\in W$. 
\item[(c)] The map $w\mapsto I_w$ gives a poset anti-isomorphism 
between $W$ and $\langle I_1,\ldots,I_n\rangle$.
\end{itemize}
\end{theorem}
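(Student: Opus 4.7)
The plan is to prove the three parts in order, with (a) splitting naturally into well-definedness and injectivity.

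For well-definedness in part (a), the key input is Matsumoto's theorem: any two reduced expressions of $w$ are connected by braid moves, so it suffices to verify the analogues of the braid relations at the level of ideals. Non-adjacency $i \not\sim j$ gives $I_i I_j = I_j I_i$ immediately, since no arrows of $\overline{Q}$ connect $i$ to $j$ and the defining data of $I_i$ and $I_j$ therefore commute. For adjacent $i \sim j$, one must check $I_i I_j I_i = I_j I_i I_j$; this is a direct computation in $\La$ using the preprojective relation $\sum_{\alpha} (\alpha\alpha^* - \alpha^*\alpha)$ at the common neighborhood of $i$ and $j$. Surjectivity of $W \to \langle I_1, \dots, I_n\rangle$ is then definitional. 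For injectivity, I would extract from $I_w$ the dimension function $\dd(\La/I_w)$ and argue that this already recovers $w$ through its encoding in the root lattice of $\Delta$, invoking the reflection-functor interpretation of $I_w \otimes_\La -$ as realizing a braid action on the derived category.

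For part (b), induct on $\ell(w)$ with trivial base $I_{\id} = \La$. For the inductive step with $\ell(s_iw) > \ell(w)$, one has $I_{s_iw} = I_i I_w$, and the ideal $I_i$ admits an explicit two-term projective resolution whose middle term is controlled by the arrows incident to $i$. Tensoring this resolution with the inductively tilting $I_w$ (after verifying $\Tor$-vanishing) produces a two-term projective resolution of $I_{s_iw}$, giving projective dimension at most one. The vanishing $\Ext^1_\La(I_{s_iw}, I_{s_iw}) = 0$ is then deduced by applying $\Hom_\La(-, I_{s_iw})$ to this resolution and exploiting the $2$-Calabi-Yau symmetry of $\La$ (a standard feature of non-Dynkin preprojective algebras). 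The coresolution $0 \to \La \to T_0 \to T_1 \to 0$ with $T_j \in \add I_{s_iw}$ is constructed from the inductive hypothesis by a pushout along the short exact sequence defining $I_i$. The $\La^{op}$ statement is symmetric, because the defining formula $I_w = I_{i_1} \cdots I_{i_l}$ uses two-sided ideals.

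For part (c), one direction is immediate: if $v \leq w$ in the weak order then a reduced expression $w = vu$ with $\ell(w) = \ell(v) + \ell(u)$ gives $I_w = I_v I_u \subseteq I_v$. The converse, that $I_w \subseteq I_v$ forces $v \leq w$, is the main obstacle. My approach is to show that each covering relation $v \lessdot s_i v$ in $W$ corresponds to a ``mutation'' of ideals $I_v \supsetneq I_{s_iv}$ admitting no $I_u$ strictly in between, producing a cover-compatible map of posets which, combined with the injectivity from (a), must be an anti-isomorphism. The difficulty is ruling out unexpected inclusions $I_w \subseteq I_v$ with $v \not\leq w$; this requires either a delicate rank computation at each primitive idempotent $e_j$, or the geometric Weyl-chamber analysis that is developed in the subsequent sections of the present paper.
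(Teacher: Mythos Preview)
The paper does not prove this theorem at all: it is quoted verbatim from \cite{BIRS} (Theorem III.1.9 and III.1.13 there, with antecedents in \cite{IR1}) and used as a black box. There is therefore no proof in the present paper to compare your proposal against.

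As for your sketch on its own terms: the overall architecture matches what one finds in \cite{BIRS,IR1}, but several steps are not yet proofs. In (a), the braid relation $I_iI_jI_i=I_jI_iI_j$ is not a short computation from the preprojective relation alone; in \cite{BIRS} it is obtained by first establishing the tilting property and then using derived-equivalence arguments. Your injectivity argument via $\underline{\dim}(\La/I_w)$ is too vague to stand---$\La/I_w$ is infinite dimensional in general when $\Delta$ is non-Dynkin, so one must work with classes in a suitable Grothendieck group rather than naive dimension vectors. In (b), invoking ``2-Calabi-Yau symmetry'' to get $\Ext^1$-vanishing is circular unless you have already set up the relevant duality for finitely presented modules over the infinite-dimensional $\La$; the actual argument in \cite{IR1,BIRS} proceeds by showing that $I_i\otimes_\La-$ and $\Hom_\La(I_i,-)$ are mutually inverse equivalences on suitable subcategories. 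In (c), you correctly identify the hard direction, but appealing to ``the geometric Weyl-chamber analysis in subsequent sections'' would make the paper's logic circular, since those sections take Theorem~\ref{birs} as input.
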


By Theorems \ref{KS-thm} and \ref{birs}, there exists a minimal projective resolution of $I_w$ as a $\La$-module
$$\xymatrix@C10pt@R5pt{0\ar[r]& P^1_w\ar[r]^{f}& P^0_w \ar[r]^{}& I_w\ar[r]&0}$$ with $f\in \rad_\La(P_w^1,P_w^0)$. 
Then put
$$P_w :=(\xymatrix@C10pt@R5pt{\cdots\ar[r]&0\ar[r]& P^1_w\ar[r]^{f}& P^0_w \ar[r]^{}&0\ar[r]&\cdots})\in\KKb(\proj\La),$$ 
which is a two-term complex. 
Then we have the following proposition.

\begin{prop}\label{P_w}
\begin{itemize}
\item[(a)] $P_w$ is a two-term tilting complex of $\KKb(\proj\La)$ for any $w\in W$.
\item[(b)] The map $w\mapsto P_w$ gives a poset anti-isomorphism 
between $W$ and  $\{P_w\}_{w\in W}$.
\end{itemize}
\end{prop}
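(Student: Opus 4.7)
The plan is to exploit the fact that $P_w$ is by construction a minimal two-term projective resolution of the tilting module $I_w$ furnished by Theorem \ref{birs}(b); thus $P_w$ is quasi-isomorphic to $I_w$ regarded as a complex concentrated in degree zero. Since $P_w$ is a bounded complex of projectives, the canonical functor $\KKb(\proj\La)\to\DDb(\La)$ is fully faithful on it, yielding the identification
\begin{equation*}
\Hom_{\KKb(\proj\La)}(P_v,P_w[i])\ \cong\ \Ext^i_\La(I_v,I_w)
\end{equation*}
for all $v,w\in W$ and all $i\in\mathbb{Z}$. Both (a) and (b) will be read off from this formula combined with the tilting properties of the ideals $I_w$.

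For part (a), specializing to $v=w$ gives $\Hom_{\KKb(\proj\La)}(P_w,P_w[i])\cong\Ext^i_\La(I_w,I_w)$, which vanishes for every $i\neq 0$: for $i\geq 2$ because the tilting module $I_w$ has projective dimension at most one (Definition \ref{defi}(a)(i)), and for $i=1$ because $\Ext^1_\La(I_w,I_w)=0$ by Definition \ref{defi}(a)(ii). This establishes condition (i) of Definition \ref{defi}(c). For condition (ii), I will invoke the exact sequence $0\to\La\to T_0\to T_1\to 0$ with $T_0,T_1\in\add I_w$ provided by Definition \ref{defi}(a)(iii); this sequence places $\La$ in the thick subcategory of $\KKb(\proj\La)$ generated by $P_w$, whence $\thick(P_w)\supseteq\thick(\La)=\KKb(\proj\La)$. (The existence and uniqueness up to isomorphism of the minimal resolution $P_w$, and hence the fact that $P_w$ is well-defined as an object of $\KKb(\proj\La)$, uses the Krull-Schmidt property from Theorem \ref{KS-thm}.)

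For part (b), the display again gives, for $i>0$, $\Hom_{\KKb(\proj\La)}(P_v,P_w[i])\cong\Ext^i_\La(I_v,I_w)$; the right-hand side vanishes automatically for $i\geq 2$ because $\mathrm{pd}\,I_w\leq 1$, so the inequality $P_v\geq P_w$ in the order of Definition \ref{defi}(d) reduces to the single condition $\Ext^1_\La(I_v,I_w)=0$, which is exactly the relation $I_v\geq I_w$ of Definition \ref{defi}(b). By Theorem \ref{birs}(c) this is in turn equivalent to $v\leq w$ in the weak order on $W$, proving that $w\mapsto P_w$ reverses the order. Injectivity follows from Theorem \ref{birs}(a): if $P_v\cong P_w$ in $\KKb(\proj\La)$, then taking $H^0$ gives $I_v\cong I_w$ and hence $v=w$.

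The argument is therefore largely routine once Theorem \ref{birs} is in hand; there is no substantial obstacle beyond two technical points whose background has already been set up elsewhere in the paper. The first is the need for Krull-Schmidt in the complete infinite-dimensional setting in order to speak of \emph{the} minimal projective resolution of $I_w$, which is exactly what Theorem \ref{KS-thm} is designed to supply. The second is the compatibility between the poset order used on $\langle I_1,\ldots,I_n\rangle$ in Theorem \ref{birs}(c) and the tilting-module order of Definition \ref{defi}(b); this is the standard convention in the literature and is what makes the anti-isomorphism translate cleanly to the complex level.
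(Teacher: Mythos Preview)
Your argument is correct and follows the same route as the paper's own proof, only with the standard implications ``tilting module $\Rightarrow$ tilting complex'' and ``module order $=$ complex order'' spelled out in detail rather than invoked as well-known. One small slip: in part (b) the vanishing of $\Ext^i_\La(I_v,I_w)$ for $i\ge 2$ is due to $\mathrm{pd}\,I_v\le 1$, not $\mathrm{pd}\,I_w\le 1$.
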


\begin{proof}
(a) Since $I_{w}$ is a tilting $\La$-module, $P_{w}$ is a tilting complex.

(b)
We have two bijections
$$W \longrightarrow \langle I_1,\ldots,I_n\rangle \longrightarrow \{P_w\}_{w\in W},$$
where the first map is given by $w \mapsto I_{w}$ as Theorem \ref{birs} and the second one is given by $I_{w} \mapsto P_{w}$ as above.
By Theorem \ref{birs}, the first map is a poset anti-isomorphism.
By the definitions of orderings of tilting modules and tilting complexes, the second one is a poset isomorphism.
Therefore the map $w \mapsto P_{w}$ gives a poset anti-isomorphism.
\end{proof}

Since $I_w$ is also a tilting $\La^{\op}$-module, we can similarly take a minimal projective resolution of $I_{w}$ as a $\La^{\op}$-module
$\xymatrix@C10pt@R5pt{0\ar[r]& Q^1_w\ar[r]^{g}& Q^0_w \ar[r]& I_w\ar[r]&0}$ with $g\in \rad_{\La^{\op}}(Q_w^1,Q_w^0).$
Then put
$$Q_w :=(\xymatrix@C10pt@R5pt{\cdots\ar[r]&0\ar[r]& Q^1_w\ar[r]^{g}& Q^0_w \ar[r]^{}&0\ar[r]&\cdots}) \in \KKb(\proj\La^{\op}),$$
$$R_w :=\Hom_{\La^{\op}}(Q_w,\La[1]).$$
Then we have $R_w\in\KKb(\proj\La)$ and this is also a two-term complex.

\begin{prop}\label{R_w}
\begin{itemize}
\item[(a)]  $R_w$ is a two-term tilting complex  of $\KKb(\proj\La)$ for any $w\in W$.
\item[(b)] The map $w\mapsto R_w$ gives a poset isomorphism 
between $W$ and $\{R_w\}_{w\in W}$.
\end{itemize}
\end{prop}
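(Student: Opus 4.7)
The plan is to realize $R_w$ as the image of $Q_w$ under the standard duality $(-)^{\vee} := \Hom_{\La^{\op}}(-,\La) \colon \KKb(\proj \La^{\op}) \to \KKb(\proj \La)^{\op}$. This is a contravariant triangulated equivalence sending $\La^{\op}$ to $\La$ and indecomposable finitely generated projectives to indecomposable finitely generated projectives. Explicitly, $R_w = Q_w^{\vee}[1]$ is a two-term complex in $\KKb(\proj \La)$ with $\Hom_{\La^{\op}}(Q_w^0, \La)$ in cohomological degree $-1$ and $\Hom_{\La^{\op}}(Q_w^1, \La)$ in degree $0$.

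For part (a), by Theorem~\ref{birs}(b) the module $I_w$ is a tilting $\La^{\op}$-module of projective dimension at most one, so its minimal projective resolution $Q_w$ is a two-term tilting complex in $\KKb(\proj \La^{\op})$. The duality then gives
\[
\Hom_{\KKb(\proj\La)}(R_w, R_w[i]) = \Hom_{\KKb(\proj\La)}(Q_w^{\vee}[1], Q_w^{\vee}[i+1]) \cong \Hom_{\KKb(\proj\La^{\op})}(Q_w, Q_w[i]),
\]
which vanishes for $i \neq 0$ by the tilting property of $Q_w$. For generation, $Q_w$ generates $\KKb(\proj \La^{\op})$ as a thick subcategory, and since $(-)^{\vee}[1]$ is a triangulated equivalence it takes generators to generators; hence $R_w$ generates $\KKb(\proj \La)$, and so $R_w$ is a two-term tilting complex.

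For part (b), I would factor $w \mapsto R_w$ through the composition $W \to \langle I_1,\ldots,I_n\rangle \to \{Q_w\}_{w\in W} \to \{R_w\}_{w\in W}$, where the first map is the anti-isomorphism from Theorem~\ref{birs}(c), the second is the order-preserving bijection sending a tilting module to its minimal projective resolution (for two-term complexes the $\Ext^{1}$ defining the tilting-module order coincides with the $\Hom$ in $\KKb$ of Definition~\ref{defi}(d)), and the third is $Q_w \mapsto Q_w^{\vee}[1]$. A Hom-computation analogous to that of part (a) gives
\[
\Hom_{\KKb(\proj\La)}(R_v, R_w[i]) \cong \Hom_{\KKb(\proj\La^{\op})}(Q_w, Q_v[i]),
\]
so $R_v \geq R_w$ if and only if $Q_w \geq Q_v$; i.e.\ the third map is an anti-isomorphism. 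The composition (anti)$\circ$(iso)$\circ$(anti) is therefore an order-preserving bijection, as desired. The main subtlety is keeping track of the double direction-reversal coming from the contravariance of $(-)^{\vee}$ and the swap of Hom-arguments it induces, and reconciling these with the shift by $[1]$ so the Hom-comparison comes out with the correct variance; this is the step most prone to error.
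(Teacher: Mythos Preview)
Your proposal is correct and follows exactly the approach the paper intends: the paper's proof is the single line ``From a duality $\Hom_\La(-,\La):\KKb(\proj\La)\overset{\simeq}{\to}\KKb(\proj\La^{\op})$, we have the assertion,'' and you have simply unpacked what that duality does to $Q_w$, to the tilting conditions, and to the partial order. The one point to be careful about (which the paper also leaves implicit) is that when you invoke Theorem~\ref{birs}(c) for the first factor, you need the order on $\langle I_1,\dots,I_n\rangle$ as tilting $\La^{\op}$-modules rather than as tilting $\La$-modules; this is valid since the result of \cite{BIRS} is symmetric in the two sides, but it is worth stating.
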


\begin{proof}
From a duality $\Hom_\La(-,\La):\K^{\rm{b}}(\proj\La)\overset{\simeq}{\to}\K^{\rm{b}}(\proj\La^{\op}),$
we have the assertion. 
\end{proof}

Then we have the following result. 

\begin{prop}\label{nointersection}We have 
$\{P_w\}_{w\in W} \bigcap \{R_v\}_{v\in W} = \emptyset$. 
In particular, there exist two different families of tilting complexes of $\KKb(\proj\La)$.
\end{prop}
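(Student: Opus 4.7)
The plan is to distinguish the two families by computing cohomology in degree $-1$, using that an isomorphism in $\KKb(\proj\La)$ is a homotopy equivalence, hence a quasi-isomorphism, and therefore induces an isomorphism on cohomology in each degree. So it suffices to find a cohomological invariant that separates every $P_w$ from every $R_v$, and I will take $H^{-1}$.

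For any $w \in W$ the complex $P_w$ is by construction the minimal projective resolution of $I_w$ placed in degrees $-1$ and $0$; in particular the differential $f : P^1_w \to P^0_w$ is injective, so $H^{-1}(P_w) = 0$. For any $v \in W$, unwinding the shift in $R_v = \Hom_{\La^{\op}}(Q_v, \La[1])$ shows that $R_v$ is the two-term complex with $\Hom_{\La^{\op}}(Q^0_v, \La)$ in degree $-1$ and $\Hom_{\La^{\op}}(Q^1_v, \La)$ in degree $0$, whose differential is $\Hom_{\La^{\op}}(g, \La)$. Applying $\Hom_{\La^{\op}}(-, \La)$ to the short exact sequence $0 \to Q^1_v \to Q^0_v \to I_v \to 0$ of right $\La$-modules and using projectivity of $Q^0_v$ yields the four-term exact sequence
$$0 \to \Hom_{\La^{\op}}(I_v, \La) \to \Hom_{\La^{\op}}(Q^0_v, \La) \to \Hom_{\La^{\op}}(Q^1_v, \La) \to \Ext^1_{\La^{\op}}(I_v, \La) \to 0,$$
which identifies $H^{-1}(R_v) \cong \Hom_{\La^{\op}}(I_v, \La)$. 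Since $I_v$ is a nonzero two-sided ideal of $\La$, the inclusion $I_v \hookrightarrow \La$ is a nonzero right $\La$-module homomorphism, so $H^{-1}(R_v) \neq 0$.

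Combining the two computations gives $H^{-1}(P_w) = 0 \neq H^{-1}(R_v)$ for all $v, w \in W$, hence $P_w \not\cong R_v$ in $\KKb(\proj\La)$, which is the desired statement. The argument is essentially formal and I do not foresee any serious obstacle; the only point that requires a small amount of care is pinning down the cohomological degree conventions for $\Hom_{\La^{\op}}(Q_v, \La[1])$ so as to match $H^{-1}(R_v)$ with $\Hom_{\La^{\op}}(I_v, \La)$ rather than with $\Ext^1_{\La^{\op}}(I_v, \La)$.
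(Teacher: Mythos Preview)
Your proof is correct and follows essentially the same approach as the paper: both arguments compute $H^{-1}$ and show $H^{-1}(P_w)=0$ while $H^{-1}(R_v)\cong\Hom_{\La^{\op}}(I_v,\La)\neq 0$. You supply a little more detail than the paper does---explicitly noting that the inclusion $I_v\hookrightarrow\La$ witnesses nonvanishing of $\Hom_{\La^{\op}}(I_v,\La)$, and spelling out the degree bookkeeping for $R_v$---but the strategy and the key computation are identical.
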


\begin{proof}

Assume that $P_w=R_v$ holds for some $v,w\in W$.

Let $g : Q_{v}^{1} \to Q_{v}^{0}$ be a minimal projective resolution of $I_{v}$ as a $\La^{\op}$-module.
By applying $\Hom_{\La^{\op}}(-,\La)$ to $g$, we have an exact sequence $$0 \to \Hom_{\La^{\op}}(I_{v}, \La) \to \Hom_{\La^{\op}}(Q_{v}^{0}, \La) \xto{g^{\ast}} \Hom_{\La^{\op}}(Q_{v}^{1},\La).$$
This implies that $H^{-1}(R_v)=\Hom_{\La^{\op}}(I_{v}, \La)\neq 0$.

On the other hand, since $P_{w} = (P_{w}^{1} \to P_{w}^{0})$ is a minimal projective resolution of $I_{w}$, we have $H^{-1}(P_{w})=0$, which is a contradiction.
\end{proof}



\section{Preprojective algebras of affine type}
In this section, we assume that $\La$ is a complete preprojective algebra of an affine graph $\Delta$, that is, $\Delta$ is one of the following graphs.

\begin{equation*}
\begin{array}{clcl}
\widetilde{\mathbb{A}_{n}}: &
\scalebox{0.7}
{
	\begin{tikzpicture}[baseline=15]
	\node(1)at(0,0){$\circ$};
	\node(0)at(4,1){$\circ$};
	\node(2)at(2,0){$\circ$};
	\node(cdots)at(4,0){$\cdots$};
	\node(n-1)at(6,0){$\circ$};
	\node(n)at(8,0){$\circ$};
	\draw[thick, -] (1)--(2);
	\draw[thick, -] (2)--(cdots);
	\draw[thick, -] (cdots)--(n-1);
	\draw[thick, -] (n-1)--(n);
	\draw[thick, -] (n)--(0);
	\draw[thick, -] (1)--(0);
	\end{tikzpicture}
} &
\widetilde{\mathbb{D}_{n}}: &
\scalebox{0.7}
{
\begin{tikzpicture}[baseline=0]
\node(01+)at(0,1){$\circ$};
\node(01-)at(0,-1){$\circ$};
\node(1)at(1,0){$\circ$};
\node(2)at(2.5,0){$\circ$};
\node(cdots)at(4,0){$\cdots$};
\node(n-1)at(5.5,0){$\circ$};
\node(n)at(7,0){$\circ$};
\node(n1+)at(8,1){$\circ$};
\node(n1-)at(8,-1){$\circ$};
\draw[thick, -] (1)--(2);
\draw[thick, -] (2)--(cdots);
\draw[thick, -] (cdots)--(n-1);
\draw[thick, -] (n-1)--(n);
\draw[thick, -] (01+)--(1);
\draw[thick, -] (01-)--(1);
\draw[thick, -] (n1+)--(n);
\draw[thick, -] (n1-)--(n);
\end{tikzpicture}
}
\end{array}
\end{equation*}

\begin{equation*}
\begin{array}{clcl}
\widetilde{\mathbb{E}_{6}}: &
\scalebox{1}
{
\begin{tikzpicture}[baseline=-30]
\node(1)at(0,0){$\circ$};
\node(2)at(0,-1){$\circ$};
\node(3)at(-2,-2){$\circ$};
\node(4)at(-1,-2){$\circ$};
\node(5)at(0,-2){$\circ$};
\node(6)at(1,-2){$\circ$};
\node(7)at(2,-2){$\circ$};
\draw[thick, -] (1)--(2);
\draw[thick, -] (2)--(5);
\draw[thick, -] (3)--(4);
\draw[thick, -] (4)--(5);
\draw[thick, -] (5)--(6);
\draw[thick, -] (6)--(7);
\end{tikzpicture} 
}&
\widetilde{\mathbb{E}_{7}}: &
\scalebox{1}
{
\begin{tikzpicture}[baseline=-15]
\node(1)at(0,0){$\circ$};
\node(2)at(-3,-1){$\circ$};
\node(3)at(-2,-1){$\circ$};
\node(4)at(-1,-1){$\circ$};
\node(5)at(0,-1){$\circ$};
\node(6)at(1,-1){$\circ$};
\node(7)at(2,-1){$\circ$};
\node(8)at(3,-1){$\circ$};
\draw[thick, -] (1)--(5);
\draw[thick, -] (2)--(3);
\draw[thick, -] (3)--(4);
\draw[thick, -] (4)--(5);
\draw[thick, -] (5)--(6);
\draw[thick, -] (6)--(7);
\draw[thick, -] (7)--(8);
\end{tikzpicture}
}
\end{array}
\end{equation*}

\begin{equation*}
\begin{array}{cl}
\widetilde{\mathbb{E}_{8}}: &
\begin{tikzpicture}[baseline=-15]
\node(1)at(0,0){$\circ$};
\node(2)at(-2,-1){$\circ$};
\node(3)at(-1,-1){$\circ$};
\node(4)at(-0,-1){$\circ$};
\node(5)at(1,-1){$\circ$};
\node(6)at(2,-1){$\circ$};
\node(7)at(3,-1){$\circ$};
\node(8)at(4,-1){$\circ$};
\node(9)at(5,-1){$\circ$};
\draw[thick, -] (1)--(4);
\draw[thick, -] (2)--(3);
\draw[thick, -] (3)--(4);
\draw[thick, -] (4)--(5);
\draw[thick, -] (5)--(6);
\draw[thick, -] (6)--(7);
\draw[thick, -] (7)--(8);
\draw[thick, -] (8)--(9);
\end{tikzpicture}
\end{array}
\end{equation*}

\if()
$$\xymatrix@C25pt@R10pt{&&\circ \ar@{-}[rrdd]&&\\
&&&&\\
\circ\ar@{-}[rruu] \ar@{-}[r] &\circ \ar@{-}[r]&\cdots\ar@{-}[r]&\circ \ar@{-}[r]&  \ar@{-}[l]\circ} $$  
$$\xymatrix@C20pt@R15pt{&&&\circ&\\
\circ \ar@{-}[r] &\circ \ar@{-}[r]
&\cdots\ar@{-}[r]&\circ \ar@{-}[r]\ar@{-}[u]
& \circ} $$ 

$$\xymatrix@C15pt@R10pt{
&&\circ&&\\
&&\ar@{-}[u]\circ&&\\
\circ \ar@{-}[r] &\circ \ar@{-}[r]
&\circ\ar@{-}[r]\ar@{-}[u]&\circ \ar@{-}[r]
& \circ} $$  
$$\xymatrix@C15pt@R15pt{
&&&\circ&&&\\
\circ \ar@{-}[r]&\circ \ar@{-}[r] &\circ \ar@{-}[r]
&\circ\ar@{-}[r]\ar@{-}[u]&\circ \ar@{-}[r]
& \circ \ar@{-}[r]&\circ} $$

$$\xymatrix@C15pt@R15pt{
&&\circ&&&&&\\
\circ \ar@{-}[r]&\circ \ar@{-}[r] &\circ \ar@{-}[r]\ar@{-}[u]
&\circ\ar@{-}[r]&\circ \ar@{-}[r]
& \circ \ar@{-}[r]&\circ\ar@{-}[r]&\circ} $$ 
\fi

Recall that a complex $T\in\KKb(\proj\La)$ is called a \emph{silting} complex if we replace the condition (i) of Definition \ref{defi} (c) to (i') : $\Hom_{\KKb(\proj\La)}(T,T[i])=0$ for any $i> 0$.
We denote by $\twosilt\La$ (respectively, $\twotilt\La$) the set of isomorphism classes of basic two-term silting complexes (respectively, tilting complexes) of $\KKb(\proj\La)$.
Our aim is to show the following result and classify all two-term tilting complexes as follows. 

\begin{thm}\label{classify}
\begin{itemize}
\item[(a)] We have $\twosilt\La=\twotilt\La$.
\item[(b)] We have
$$\twotilt\La = \{P_w\}_{w\in W} \coprod \{R_w\}_{w\in W}.$$
\end{itemize}
\end{thm}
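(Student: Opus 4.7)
The approach is to use the $g$-vector/cone machinery for two-term silting complexes and translate the classification into a covering problem for the action of $W$ on the Cartan space of $\Delta$.

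First, I would associate to every two-term presilting complex $T \in \KKb(\proj\La)$ its $g$-vector cone $C(T) \subset K_0(\proj\La)\otimes_{\mathbb{Z}}\mathbb{R}$, spanned by the classes of the indecomposable direct summands of $T$. Standard consequences of Aihara--Iyama and Demonet--Iyama--Jasso theory, combined with the Krull--Schmidt property of $\KKb(\proj\La)$ from Theorem \ref{KS-thm}, give: the interiors of the cones of distinct basic two-term silting complexes are pairwise disjoint, and a basic two-term silting complex is determined up to isomorphism by its cone.

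Next, by Theorem \ref{birs geometric} the cone $C(P_w)$ is identified with the Weyl chamber $w\cdot C_+$, where $C_+$ is the fundamental chamber of $W$ acting on $\mathbb{R}^{\Delta_0}$ via its geometric representation; the positive Tits cone is $\Tits_+ = \bigcup_{w\in W} w\, C_+$. The $\La$-versus-$\La^{\op}$ duality used to define $R_w$ in Proposition \ref{R_w} translates this into $C(R_w) = w\cdot(-C_+) \subset \Tits_- := -\Tits_+$. Invoking Proposition \ref{cover}, together with the standard fact that for an affine Coxeter group $\Tits_+ \cup \Tits_-$ is open and dense in $\mathbb{R}^{\Delta_0}$ (the only missed locus lies on the imaginary-root hyperplane $\delta^\perp$), I conclude that the cones in $\{C(P_w)\}_{w\in W} \cup \{C(R_w)\}_{w\in W}$ tile a dense open subset of $\mathbb{R}^{\Delta_0}$.

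Given $S \in \twosilt\La$, its cone $C(S)$ has nonempty interior, so that interior must meet the interior of some chamber $wC_+$ or $w(-C_+)$. Disjointness of silting-cone interiors then forces $C(S) = C(P_w)$ or $C(S) = C(R_w)$, whence $S \cong P_w$ or $S \cong R_w$ by the cone-determines-silting principle. This yields (b), with disjointness of the two families provided by Proposition \ref{nointersection}; and (a) follows because each $P_w$ and $R_w$ is tilting by Propositions \ref{P_w} and \ref{R_w}. The main obstacle is importing the $g$-vector/cone machinery --- originally developed for finite-dimensional algebras --- into the infinite-dimensional setting of $\La$; this is precisely where Krull--Schmidt for $\KKb(\proj\La)$ (Theorem \ref{KS-thm}) is crucial. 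A secondary subtlety is ruling out that a silting cone be trapped on the excluded hyperplane $\delta^\perp$, which is resolved by the full-dimensionality of $C(S)$ and the fact that $\delta^\perp$ contains no full-dimensional open region.
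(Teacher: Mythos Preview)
Your proposal is correct and follows essentially the same route as the paper's proof: both arguments use Proposition~\ref{cover} to see that the cones $\{C(P_w)\}\cup\{C(R_w)\}$ cover a dense subset of $V^*$, invoke the full-dimensionality of $C(S)$ for any $S\in\twosilt\La$ together with the interior-disjointness of silting cones (Theorem~\ref{injective}, i.e.\ the DIJ/Hille/Plamondon machinery) to force $S\cong P_w$ or $S\cong R_w$, and then read off (a) from Propositions~\ref{P_w} and~\ref{R_w}. One small caution: your formula $C(R_w)=w\cdot(-C_+)$ is not quite what the duality gives (the passage through $\La^{\op}$ may introduce an inversion $w\mapsto w^{-1}$), but only the equality of the \emph{unions} $\bigcup_w C(R_w)=\bigcup_w wC_-$ is needed, and that holds regardless.
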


In the rest of this section, we will give a proof of Theorem \ref{classify}.

\begin{defi}\cite{BB}
Let $V$ be a real vector space of dimension $n=|\Delta_0|$ with a basis $\alpha_i$ ($i\in \Delta_0$) and let $V^*$ be the dual vector space with a basis $\alpha_i^*$. 
Let $m_{ij}$ be the number of edges between the 
vertices $i$ and $j$ of $\Delta$ (note that $m_{i,j}>1$ only if $\Delta$ is type $\widetilde{\mathbb{A}}_1$). 
Then we define the \emph{geometric representation} $\sigma : W \to GL(V)$ of $W$ by 
$$\sigma_{s_i}(\alpha_j)=\alpha_j + (m_{ij}-2\delta_{ij})\alpha_i.$$

The \emph{contragradient of the geometric representation} $\sigma^* : W \to GL(V^*)$ is then defined by 
$$\sigma_{s_i}^*(\alpha_j^*) =\left\{\begin{array}{ccc}
\alpha_j^*&(i\neq j)\\
&&\\
-\alpha_j^*+\sum_{t\neq j} m_{tj}\alpha_t^*&(i= j).
\end{array}\right.$$ 
\end{defi}

Note that we have $\langle\sigma_w^*(x),\sigma_w(y)\rangle = \langle x,y \rangle$, where $\langle x,y \rangle$ is the canonical pairing of $x\in V^*$ and $y\in V$.

\begin{remk}
By defining a label $m(i,j)$ by 
$$m(i,j):=\left\{\begin{array}{ccc}
1&(i= j)\\
3&(\exists\ \textnormal{one edge between } i\textnormal{ and }j )\\
\infty&(\exists\ \textnormal{two edges between } i\textnormal{ and }j )\\
2&(\nexists\ \textnormal{edge between } i\textnormal{ and }j)
\end{array}\right.$$
and a symmetric bilinear form $(-,-)$ on $V$ by 
$(\alpha_i,\alpha_j):=-\cos \frac{\pi}{m(i,j)},$
we have $\sigma_{s_i}(\alpha_j)=\alpha_j-2(\alpha_i,\alpha_j)\alpha_i$ which is the same notation as \cite{Hu}.
\end{remk}

Let $K_0(\La)_\mathbb{R}:=K_0(\KKb(\proj\La))\otimes_\mathbb{Z} \mathbb{R}$ and let $V^*\to K_0(\La)_\mathbb{R}$ be an isomorphism defined by $\alpha_i^*\mapsto [\La e_i]$.
For $T = T_1\oplus \cdots\oplus T_n \in \twotilt\La$, we denote by $C(T)$ the cone 
spanned by $\{[T_1],\ldots,[T_n]\}$ in $K_0(\La)_\mathbb{R}=V^*$, that is, $C(T) =\{\sum_{i=1}^na_i[T_i]\ |\ a_i\in\mathbb{R}_{\geq0}\}$.
We simply write $C_+:=C(P_\id)$ and $wC_+:=\sigma_w^*C_+$. 

Then we have the following result. 
\begin{thm}\cite{IR1,BIRS}\label{birs geometric}
\begin{itemize}
\item[(a)]
The induced isomorphism $GL(V^*) \to GL(K_0(\La)_\mathbb{R})$ satisfies 
$\sigma_w^* \mapsto [I_{w} \Lotimes -]$ for any $w\in W$.
\item[(b)] We have $C(P_w)=wC_+$. 
\end{itemize}
\end{thm}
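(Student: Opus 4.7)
The plan is to prove (a) first by reducing to simple generators and then deduce (b) by identifying the indecomposable summands of $P_w$ with classes $[I_w \Lotimes \La e_j]$.

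For (a), I would first observe that both $w \mapsto \sigma_w^*$ and $w \mapsto [I_w \Lotimes -]$ are multiplicative: the former by definition, the latter because for any reduced expression $w = s_{i_1}\cdots s_{i_l}$ the derived tensor product $I_{i_1} \Lotimes \cdots \Lotimes I_{i_l}$ is quasi-isomorphic to $I_w$ (a fact available from \cite{IR1,BIRS}, ultimately coming from the vanishing of higher Tors between the $I_{i_k}$ in the non-Dynkin case). Therefore it suffices to treat $w = s_i$ and check the formula on the basis $\{[\La e_j]\}$. Using the short exact sequence $0 \to I_i \to \La \to S_i \to 0$ of $\La$-bimodules and the fact that $\La e_j$ is projective, we get $[I_i \Lotimes \La e_j] = [\La e_j] - [S_i e_j]$, which is $[\La e_j]$ when $j \neq i$ and $[\La e_i] - [S_i]$ when $j = i$. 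The second case is computed from the standard two-term projective resolution
\[
0 \to \La e_i \to \bigoplus_{k} (\La e_k)^{m_{ik}} \to \La e_i \to S_i \to 0
\]
of the simple $S_i$ (valid in non-Dynkin type), giving $[S_i] = 2[\La e_i] - \sum_k m_{ik}[\La e_k]$, hence $[I_i \Lotimes \La e_i] = -[\La e_i] + \sum_{k \neq i} m_{ki}[\La e_k]$. Under the identification $\alpha_j^* \leftrightarrow [\La e_j]$, this matches exactly the defining formula for $\sigma_{s_i}^*(\alpha_j^*)$.

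For (b), I would use that $I_w$ decomposes as a left $\La$-module into $I_w = \bigoplus_{j \in \Delta_0} I_w e_j$, and that by the Krull-Schmidt Theorem \ref{KS-thm} together with the fact that $I_w$ is a basic tilting module with $n$ non-isomorphic indecomposable summands (Theorem \ref{birs}), each $I_w e_j$ is indecomposable. Accordingly the minimal projective resolution $P_w$ splits as $P_w = \bigoplus_j P_w^{(j)}$ with $P_w^{(j)}$ the minimal resolution of $I_w e_j$, and each $P_w^{(j)}$ is an indecomposable summand of $P_w$ in $\KKb(\proj \La)$. Taking classes in $K_0(\La)_\mathbb{R}$ gives $[P_w^{(j)}] = [I_w e_j] = [I_w \Lotimes \La e_j]$, the last equality because $\La e_j$ is projective so all higher Tor vanishes.

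Combining the two, part (a) tells us that under $V^* \cong K_0(\La)_\mathbb{R}$ we have $[I_w \Lotimes \La e_j] = \sigma_w^*(\alpha_j^*)$, so
\[
C(P_w) = \sum_{j} \mathbb{R}_{\geq 0}\, [P_w^{(j)}] = \sum_{j} \mathbb{R}_{\geq 0}\, \sigma_w^*(\alpha_j^*) = \sigma_w^*(C_+) = wC_+.
\]
The main obstacle I expect is the multiplicativity $I_{i_1} \Lotimes \cdots \Lotimes I_{i_l} \simeq I_w$ needed in (a); everything else is essentially a bookkeeping computation, but that quasi-isomorphism requires the non-Dynkin hypothesis and is the substantive input from \cite{IR1,BIRS}. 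A secondary point to be careful about is the indecomposability of each $P_w^{(j)}$, which relies on Theorem \ref{KS-thm} together with the fact that $I_w \Lotimes -$ is an autoequivalence of $\KKb(\proj \La)$ sending the basic tilting object $\La$ to the basic tilting object $I_w$.
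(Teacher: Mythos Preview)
Your proposal is correct and follows essentially the same approach as the paper, which simply cites \cite{IR1,BIRS} for (a) and derives (b) in the single line $C(P_w)=C(I_w\Lotimes\La)=wC_+$. You have unpacked the content of those references (the reduction to simple reflections via multiplicativity of $I_w\Lotimes-$ and the computation using the length-two projective resolution of $S_i$) and made explicit the identification of the indecomposable summands of $P_w$ with $I_w e_j$ via the autoequivalence $I_w\Lotimes-$; this is exactly the argument underlying the paper's citations, so there is no genuine difference in route.
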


\begin{proof}
(a) follows from \cite{IR1,BIRS} and  (a) implies 
$C(P_w) = C(I_{w} \Lotimes \La)=wC_+$. 
\end{proof}

To discuss our geometric situation more precisely, we prepare some notation. Note that we have $C_+=\{ f \in V^{\ast} \mid \langle f, \alpha_i \rangle \geq 0, \ {}^{\forall}i\in \Delta_0 \}$.
We define \emph{Tits cone} by $U:=\bigcup_{w\in W}wC_+$.
A \emph{real root} is an element of $\Phi:=\{\sigma_w(\alpha_i) \mid i\in \Delta_0, w\in W\}$, and for each real root $\alpha$, we  define a hyperplane $H_{\alpha}:=\{f\in V^{\ast} \mid \langle f, \alpha \rangle = 0\}$.
Then a \emph{Weyl chamber} is defined as a connected component of $U\setminus \bigcup_{\alpha\in \Phi}H_{\alpha}$.
It is well known that there exists a bijection between the set of Weyl chambers of $W$ and the elements of the Coxeter group $W$ (see \cite[Theorem 5.13]{Hu} for instance).

The following corollary is a direct consequence of Proposition \ref{birs} and Theorem \ref{birs geometric}, which gives a direct connection between two-term tilting complexes $P_w$ over $\La$ and Weyl chambers.
\begin{cor}\label{cor-W-I-chamber}
The cones of two-term tilting complexes $\{P_w\}_{w\in W}$ coincide with the closure of the Weyl chambers of $W$. 
In particular, there exist bijections between 
the set $\{P_w\}_{w\in W}$ and the set of Weyl chambers of $W$.
\end{cor}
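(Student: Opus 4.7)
The plan is to derive the corollary as an immediate consequence of Theorem~\ref{birs geometric}(b) combined with the classical description of Weyl chambers for a Coxeter group. First, I would note that Theorem~\ref{birs geometric}(b) already identifies the cone of each two-term tilting complex $P_w$ with a translate of the fundamental cone: $C(P_w) = wC_+ = \sigma_w^*(C_+)$ for every $w \in W$. So the problem reduces purely to a statement about the $W$-action on $V^*$.

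Next, I would invoke the standard Coxeter-geometric fact, as cited in the paper itself via \cite[Theorem 5.13]{Hu}: the map $w \mapsto \sigma_w^*(\mathrm{int}(C_+))$ is a bijection between $W$ and the set of Weyl chambers of $W$ (the connected components of $U \setminus \bigcup_{\alpha \in \Phi} H_\alpha$). Taking closures, the closed Weyl chambers are exactly $\{ wC_+ \mid w \in W \}$, and distinct elements of $W$ give distinct closed chambers. Combining this with Theorem~\ref{birs geometric}(b) immediately yields the set equality
\[
\{ C(P_w) \mid w \in W \} \;=\; \{\, \overline{D} \mid D \text{ is a Weyl chamber of } W \,\}.
\]

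For the ``in particular'' statement, I would chain two bijections. By Proposition~\ref{P_w}(b), the assignment $w \mapsto P_w$ is a bijection $W \to \{P_w\}_{w \in W}$; by the Coxeter-geometric fact recalled above, $w \mapsto \overline{\sigma_w^*(\mathrm{int}(C_+))} = wC_+$ is a bijection between $W$ and the set of closed Weyl chambers. Composing through $W$ gives the desired bijection $P_w \leftrightarrow wC_+$, compatible with the assignment $T \mapsto C(T)$.

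There is essentially no obstacle here beyond invoking the two cited results correctly; the only point that requires a moment of care is that $C(P_w) = wC_+$ is a set equality of closed cones, so one must pass through closures when matching with Weyl chambers (which are defined as open connected components), and that the bijectivity of $w \mapsto wC_+$ is a genuinely nontrivial input coming from the faithful action of $W$ on the Tits cone, supplied by \cite[Theorem 5.13]{Hu}.
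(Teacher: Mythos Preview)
Your proposal is correct and follows essentially the same approach as the paper: the paper simply states that the corollary is a direct consequence of Theorem~\ref{birs} (via Proposition~\ref{P_w}) and Theorem~\ref{birs geometric}, together with the bijection between $W$ and Weyl chambers from \cite[Theorem 5.13]{Hu}, without spelling out any further details. You have accurately identified these ingredients and explained how they fit together.
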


Let $C_-:=C(R_\id)=-C_+$. 
Then we have the following result, which follows from Theorem \ref{birs geometric} and the standard fact about the affine Weyl fans  (see \cite{Hu} for example). 

\begin{prop}\label{cover}We have 
$$\overline{\bigcup_{w\in W}wC_+\cup\bigcup_{w\in W}wC_-} = V^*,$$ where $\overline{(-)}$ denotes the closure.
\end{prop}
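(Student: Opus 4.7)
The plan is to reduce the claim to the standard description of the Tits cone of an affine Coxeter group as essentially a half-space of $V^*$. Recall that $\Delta$ being affine means the symmetric bilinear form $(-,-)$ on $V$ has a one-dimensional radical, spanned by the null root $\delta = \sum_{i\in \Delta_0} c_i \alpha_i$ with all $c_i > 0$. Since $(\alpha_i,\delta) = 0$ for every $i$, the reflection formula $\sigma_{s_i}(\alpha_j) = \alpha_j - 2(\alpha_i,\alpha_j)\alpha_i$ from the remark in the excerpt gives $\sigma_{s_i}(\delta) = \delta$, and hence $\sigma_w(\delta) = \delta$ for all $w\in W$. Combined with $\langle \sigma_w^*(x), \sigma_w(y)\rangle = \langle x, y\rangle$, the linear functional $f\mapsto \langle f, \delta\rangle$ on $V^*$ is therefore invariant under the contragredient $W$-action.

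Next I would invoke the standard Tits cone theorem for affine Coxeter groups (see e.g.\ \cite[\S6.5]{Hu}): one has
$$U = \bigcup_{w\in W} wC_+ = \{f \in V^* : \langle f,\delta\rangle > 0\} \cup \{0\}.$$
The inclusion $\subseteq$ follows from the previous paragraph together with the fact that $C_+ = \{f \in V^* : \langle f, \alpha_i\rangle \geq 0 \text{ for all } i\}$ lies in the closed half-space $\{\langle -,\delta\rangle \geq 0\}$ (and in fact meets its boundary only at the origin, since the $c_i$ are strictly positive); the reverse inclusion is the nontrivial content of the theorem. Taking negatives and using that $\sigma_w^*$ is linear with $C_- = -C_+$, so that $wC_- = -wC_+$, I obtain $\bigcup_{w\in W} wC_- = -U = \{f \in V^* : \langle f,\delta\rangle < 0\}\cup\{0\}$.

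Putting these together,
$$\bigcup_{w\in W} wC_+ \cup \bigcup_{w\in W} wC_- = U \cup (-U) = V^* \setminus \bigl(\{f \in V^* : \langle f,\delta\rangle = 0\}\setminus\{0\}\bigr),$$
and the removed set is contained in a proper hyperplane of $V^*$ and hence has empty interior. The complement of a hyperplane is dense in $V^*$, so taking closure gives $V^*$, as required. The main obstacle is really just correctly invoking the affine Tits cone theorem --- that is the one place where the affine hypothesis on $\Delta$ enters essentially --- after which the rest is a short density argument using the linearity of $\sigma_w^*$ and the $W$-invariance of $\langle-,\delta\rangle$.
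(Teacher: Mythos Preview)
Your proof is correct and follows essentially the same route as the paper. Both arguments identify the null root $\delta$ (the paper's $V^{\perp}$), observe that the pairing $\langle -,\delta\rangle$ is $W$-invariant, and deduce that the Tits cone $\bigcup_{w}wC_+$ is the open half-space $\{\langle -,\delta\rangle>0\}\cup\{0\}$, whence the union with its negative has closure $V^*$. The only difference is packaging: you invoke the affine Tits-cone description as a black box from \cite[\S6.5]{Hu}, whereas the paper unpacks it slightly by slicing at the affine hyperplane $E=\{\langle -,\delta\rangle=1\}$ and appealing to the transitivity of the affine Weyl group on alcoves (\cite[Proposition~4.3, \S6.5]{Hu}) to see that $\bigcup_{w}wC_+\cap E=E$.
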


\begin{proof}
We will show $\overline{\bigcup_{w\in W}wC_+}$ gives a half-space of $V^*$. 
Then, because $C_-=-C_+$, $\overline{\bigcup_{w\in W}wC_-}$ gives the rest half-space and we get the conclusion.

Let $M=(M(i,j))_{i,j\in\Delta_0}$ be the matrix defined by $M(i,j) :=-\frac{1}{2}(m_{ij}-\delta_{ij})$ and 
let $V^{\perp} :=\{v\in V\ |\ Mv=0\}$. 
Then $V^{\perp}$ is a 1-dimensional vector space given by $\sum_{i=1}^nc_i\alpha_i$, where $c_i>0$ for any $i$ \cite[Proposition 2.6]{Hu}.

Let $E := \{f\in V^*\ |\ \langle f, V^{\perp}\rangle = 1\}$, 
$Z_i := \{f\in V^*\ |\ \langle f, \alpha_i \rangle = 0\}$ and 
$E_i := E\cap Z_i$. 
For $v\in V^{\perp}$, 
we have $\sigma_w(v)=v$ for $w\in W$ \cite[Proposition 6.3]{Hu}, and hence $\sigma_w^*$ stabilizes $E$.
Then $\sigma_{s_i}^*$ acts as an orthogonal reflection relative to 
$E_i$ and 
 $E_i$ gives the geometric description of the affine Weyl group as the hyperplanes bounding the alcove $F:=C_+\cap E$ \cite[section 6.5]{Hu}. 
Therefore the action of $W$ to $F$  permutes the all alcoves in $E$ transitively \cite[Proposition 4.3]{Hu} and hence 
$\bigcup_{w\in W}w(F)=E$. 
Thus, $wC_+\cap E = w(F)$ and hence 
$\bigcup_{w\in W}wC_+\cap E=E$.
Consequently, 
$\overline{\bigcup_{w\in W}wC_+} = \{f\in V^* | \langle f, V^{\perp}\rangle \geq 0\}$ and we get the conclusion.
\end{proof}

\begin{exam}
\begin{itemize}
\item[(a)]
Let $\Delta$ be the type $\widetilde{\mathbb{A}}_1$. 
The situation can be described as follows.

$$
\xymatrix@C12pt@R10pt{  
E\ar@{-}[rrrrrrrddddddd]& &\alpha_2  & &&&&\\
 & & & &&&{\begin{smallmatrix}V^\perp\end{smallmatrix}}&\\
 & &&  &&&&\\
& && &&&&\\
& & & & &&&\\
\ar[rrrrrrr]^{} && 
\ar@{--}[uuuurrrr]
&  & &   &&\alpha_1 \\
 &&& &&&&\\
&&\ar[uuuuuuu]&&&&&}
$$

In this case, the intersection of $E$ and $\alpha_1$ (respectively,  $\alpha_2$) gives $E_2$ (respectively, $E_1$) and their  interval is $F$.

\item[(b)]
Let $\Delta$ be the type $\widetilde{\mathbb{A}}_2$.
The situation can be described as follows.
$$
\begin{tikzpicture}
\draw[thick,->] (0,0,0) -- (3.5,0,0) node[anchor=north west]{$\alpha_2$};
\draw[thick,->] (0,0,0) -- (0,3.5,0)node[anchor=north west]{$\alpha_3$};
\draw[thick,->] (0,0,0) -- (0,0,4.5) node[anchor=south east]{$\alpha_1$};
\coordinate (O) at (0,0,0);
\coordinate (P) at (2,2,0);
\draw[dashed] (O) -- (P) node[anchor=south west]{$V^\perp$};
\draw  (-0.5,2.1,0) -- (2.7,-0.5,0) node[anchor=north east]{$E_1$};
\draw (0,-0.5,2.5)-- (0.3,2.6,0)    node[anchor=north west]{$E_2$};
\draw  (2.8,0.2,0) -- (-1.1,-0.5,1.5)  node[anchor=south ]{$E_3$};
\end{tikzpicture}
$$
\end{itemize}
In this case, the inner region surrounded by $E_1,E_2$ and $E_3$ gives $F$.
\end{exam}

Next we use the following result.

\begin{thm}\cite{DIJ,P,Hi}\label{injective}
\begin{itemize}
\item[(a)] The map $T \mapsto [T]$ induces an injection $\twosilt\La\to K_0(\KKb(\proj\La))$.
\item[(b)] Let $T$ and $U$ be two-term silting complexes in $\KKb(\proj\La)$. 
If $T\ncong U$, then $C(T)$ and $C(U)$ intersect only at their boundaries.
\end{itemize}
\end{thm}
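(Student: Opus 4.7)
Both parts (a) and (b) are to be transferred from the finite-dimensional setting of \cite{DIJ} (with extensions in \cite{P,Hi}) to our complete preprojective algebra $\La$. The essential prerequisite is that $\KKb(\proj\La)$ is Krull-Schmidt, which is supplied by Theorem \ref{KS-thm}.

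The plan for (a) has two steps. First, I would establish the g-vector theorem in this setting: for any basic two-term silting complex $T = T_{1}\oplus\cdots\oplus T_{m}$, the classes $[T_{1}],\ldots,[T_{m}]$ are linearly independent in $K_{0}(\KKb(\proj\La))$ and $m$ equals $n := |\Delta_{0}|$. Linear independence is a formal consequence of the silting condition $\Hom_{\KKb(\proj\La)}(T,T[1])=0$ via the standard Dehy--Keller pairing argument applied to the Euler form; the count $m=n$ comes from the generation condition together with a rank calculation in $K_{0}(\KKb(\proj\La))\cong \mathbb{Z}^n$. Second, I would show that the isomorphism class of $T$ is recoverable from $[T]\in K_{0}$. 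Writing $T=(P^{-1}\to P^{0})$ in minimal form, the minimality (in $\rad$) prevents $P^{-1}$ and $P^{0}$ from sharing indecomposable summands, so $[P^{-1}]$ and $[P^{0}]$ can each be read off from $[T]$ using the basis $\{[\La e_i]\}$. Finally, the silting condition combined with the Krull-Schmidt property of $\KKb(\proj\La)$ forces the differential to be unique up to isomorphism of complexes, yielding the injectivity.

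For (b), I would deduce the cone statement from (a) by a convexity argument. By the first step above, $C(T)$ is a full-dimensional simplicial cone with extremal rays given by the classes $[T_{i}]$ of the indecomposable summands. If $T\ncong U$ and an interior point of $C(T)\cap C(U)$ existed, then one could perturb and express this point as a strictly positive combination of the extremal rays of both cones; a standard exchange (mutation) argument, applied indecomposable summand by indecomposable summand, would then produce a silting complex whose g-vector set coincides with both $\{[T_i]\}$ and $\{[U_j]\}$, forcing $T \cong U$ by (a) and giving a contradiction.

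The main obstacle will be that several tools used in \cite{DIJ}, notably Auslander--Reiten theory and $\tau$-tilting theory proper, rely on finite-dimensionality and are not directly available for $\La$. The plan to circumvent this is to work entirely inside $\KKb(\proj\La)$ via general silting theory in the sense of \cite{AI}: the Krull-Schmidt property of Theorem \ref{KS-thm} replaces unique AR-decompositions, minimal projective presentations exist for every object in $\fp\La$, and the silting mutation formalism of \cite{AI} transfers verbatim to our setting, which is what allows the \cite{DIJ, P, Hi} arguments to go through.
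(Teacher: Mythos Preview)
Your structural outline for (a) matches the paper's: first observe that $\add(T^{0})\cap\add(T^{1})=\{0\}$ (this is \cite[Lemma 2.25]{AI}, not merely minimality of the presentation), so that $[T]=[U]$ forces $T^{0}\simeq U^{0}$ and $T^{1}\simeq U^{1}$. However, your final sentence --- ``the silting condition combined with the Krull-Schmidt property \ldots\ forces the differential to be unique up to isomorphism'' --- is precisely where the real content lies, and it is not a formality. Krull--Schmidt by itself says nothing about why two different maps $f^{T},f^{U}\in\Hom_{\La}(T^{1},T^{0})$ should give isomorphic complexes. The paper invokes the orbit argument of \cite[subsection 3.1]{P}: one lets $\Aut_{\La}(T^{0})\times\Aut_{\La}(T^{1})$ act on $\Hom_{\La}(T^{1},T^{0})$ by $(g^{0},g^{1})\cdot f=g^{0}f(g^{1})^{-1}$, and uses the vanishing $\Hom_{\KKb(\proj\La)}(T,T[1])=0$ (rigidity, not just ``silting'') to show that the orbits of $f^{T}$ and $f^{U}$ are each open/dense in a suitable sense, hence must meet. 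In the complete setting this density argument requires the adaptation carried out in \cite{P}, which is exactly why that reference appears alongside \cite{DIJ}. Your proposal does not identify this mechanism, so as written there is a genuine gap at the decisive step.

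For (b), the paper simply records that it follows from (a) together with \cite{Hi,DIJ}. Your proposed route via a ``standard exchange (mutation) argument, applied indecomposable summand by indecomposable summand'' is not the argument in those references and, as stated, is too vague to assess: it is unclear how an interior point of $C(T)\cap C(U)$ would let you mutate $T$ into a complex with the same $g$-vector set as $U$ without already knowing a great deal about how cones fit together. The usual argument (as in \cite{DIJ}) goes instead through sign-coherence/Euler-pairing considerations showing that an interior point of $C(T)$ determines the indecomposable summands of $T$ up to isomorphism, and then (a) finishes. If you want to supply an argument rather than cite, that is the line to take.
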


\begin{proof}
(a) follows from \cite{DIJ}, together with \cite{P} in which infinite dimensional cases are discussed. 
For the convenience of the reader, we give a sketch of a proof.
Let 
\begin{align*}
T :=(\xymatrix@C10pt@R5pt{\cdots\ar[r]&0\ar[r]& T^1\ar[r]^{f^T}& T^0 \ar[r]^{}&0\ar[r]&\cdots}) \ \textnormal{and}\
U :=(\xymatrix@C10pt@R5pt{\cdots\ar[r]&0\ar[r]& U^1\ar[r]^{f^U}& U^0 \ar[r]^{}&0\ar[r]&\cdots})
\end{align*} 
 be two-term silting complexes.
We have $\add (T^{0}) \cap \add (T^{1})=\{0\}$ and $\add (U^{0}) \cap \add (U^{1})=\{0\}$ by \cite[Lemma 2.25]{AI}.
Assume that $[T]=[U]$.
This implies that $T^{0}\simeq U^{0}$ and $T^{1}\simeq U^{1}$.
Consider an action of the group $\Aut_\La(T^{0})\times\Aut_\La(T^{1})$ on $\Hom_\La(T^{1},T^{0})$ given by $(g^{0}, g^{1})(f):=g^{0}f(g^{1})^{-1}$.
Then by the same argument as \cite[subsection 3.1]{P}, the orbits of $f^{T}$ and $f^{U}$ in $\Hom_\La(T^{1},T^{0})$ intersect, which implies $T\cong U$.

(b) follows from (a) and \cite{Hi,DIJ}.
\end{proof}

Using the above results, we finally  obtain the following proof. 

\begin{proof}[Proof of Theorem \ref{classify}]
Let $S$ be a two-term silting complex $\KKb(\proj\La)$. Then $[S]$ gives a basis of $K_0(\KKb(\proj\La))$ \cite[Theorem 2.27]{AI}. 
On the other hand, by
Proposition \ref{cover}, we have an equality
$\overline{\bigcup_{w\in W}C(P_w)\cup\bigcup_{w\in W} C(R_w)} = V^*$. 
Thus Theorem \ref{injective} (b) shows that $S\cong P_w$ or $S\cong R_w$ for some $w\in W$. 
Because $\{P_w\}_{w\in W}$ and $\{R_w\}_{w\in W}$ are tilting complexes by Propositions \ref{P_w} and \ref{R_w}, we get the assertion.
\end{proof}


\section{The Krull-Schmidt property of homotopy categories}\label{Krull-Schmidt property}

Let $\La$ be a ring and $\fm$ be a two-sided ideal of $\La$.
Then $\La$ is said to be \emph{complete with respect to the $\fm$-adic topology} if the natural morphism $\La \to \varprojlim (\La/\fm^{i})$ is an isomorphism, where $\varprojlim (\La/\fm^{i})$ is the inverse limit of an inverse system $(\cdots \to \La/\fm^{i+1} \to \La/\fm^{i} \to \cdots)$ of rings.
We consider the following condition (F) on $\La$:
\begin{center} (F) $\La/\fm^{i}$ has finite length for all $i\geq 1$.
\end{center}

\begin{exam}\label{exam-pseudo-compact-ring}
	The following rings are examples of complete rings satisfying the condition (F).
	\begin{itemize}
		\item[(i)]
		A left artinian ring $\La$ with $\fm=\rad\La$.
		\item[(ii)]
		Let $R$ be a commutative local complete noetherian ring with the maximal ideal $\fm$.
		Then an $R$-algebra $\La$, which is finitely generated as an $R$-module, is a complete ring by a two-sided ideal $\fm\La$ satisfying the condition (F) by \cite[(6.5) Proposition]{CR}.
		\item[(iii)]
		For a finite quiver $Q$, let $KQ$ be the path algebra of $Q$ over a field $K$ and $I$ be a two-sided ideal of $KQ$.
		We denote by $\widehat{KQ}$ the complete path algebra of $Q$ and denote by $\mathfrak{n}$ the two-sided ideal of $\widehat{KQ}$ generated by all arrows.
		The closure $\overline{I}=\bigcap_{i=0}^{\infty}(I + \mathfrak{n}^{i})$ of $I$ with respect to the $\mathfrak{n}$-adic topology on $\widehat{KQ}$ is a two-sided ideal of $\widehat{KQ}$.
		Then a $K$-algebra $\La:=\widehat{KQ}/\overline{I}$ is a  complete ring by the ideal $\fm$ generated by all arrows and 
			$\fm$ satisfies the condition (F).
	\end{itemize}
\end{exam}

	Assume that $\La$ is complete with respect to the $\fm$-adic topology satisfying the condition (F).
	Then $\fm$ and the radical $\rad \La$ of $\La$ define the same adic topology on $\La$.
	Therefore, in this case, we say that $\La$ is complete instead of saying that $\La$ is complete with respect to the $\fm$-adic topology. A complete ring which satisfies the condition (F) is also called a pseudo-compact ring (we refer \cite{Ga, KeY, VdB} for details).

Recall that an additive category is called a \emph{Krull-Schmidt} category if every object decomposes into a finite direct sum of objects having local endomorphism rings.
Any object of a Krull-Schmidt category decomposes into a finite direct sum of indecomposable objects, and such a decomposition is unique up to isomorphisms and a permutation (see \cite[Corollary 4.3]{Krause} for details).
For a ring $R$, we denote by $\proj R$ the category of finitely generated projective left $R$-modules. 

In this section, we show that if $\La$ is a complete ring which  satisfies the condition (F), then the homotopy category $\KKb(\proj\La)$ is a Krull-Schmidt category.
The theorem was originally shown in the case where $\La$ is a complete path algebra in \cite{KeY}.
The authors thank Bernhard Keller and Dong Yang for their pointing out the validity of result for a complete ring which satisfies the condition (F).

It is easy to show that $R$ is complete by a two-sided ideal $\fm$ satisfying the condition (F)
if and only if $R$ satisfies the following three conditions: (i) $R/\rad R$ is a semi-simple ring (ii) $R$ is complete with respect to the $(\rad R)$-adic topology, and (iii) $\rad R$ is a finitely generated left $R$-module.

We first observe that if $\La$ is a complete ring which satisfies the condition (F), then $\proj\La$ is Krull-Schmidt. 
A ring $R$ is called \emph{semi-perfect} \cite[\S27]{AF} if it satisfies (i) $R/\rad R$ is a semi-simple ring and (ii) every idempotent in $R/\rad R$ is the image of an idempotent in $R$. 
Then the following lemma about semi-perfect rings is well-known.

\begin{lemm}\label{lem-semi-perfect-KS}
	Let $R$ be a ring.
	Then the following statements are equivalent.
	\begin{itemize}
		\item[(i)]
		$R$ is a semi-perfect ring.
		\item[(ii)]
		Every finitely generated $R$-module has a projective cover.
		\item[(iii)]
		The category $\proj R$ is a Krull-Schmidt category.
	\end{itemize}
\end{lemm}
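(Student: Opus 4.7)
The plan is to prove the three implications \textup{(i)}$\Rightarrow$\textup{(ii)}$\Rightarrow$\textup{(iii)}$\Rightarrow$\textup{(i)}, treating this as a synthesis of classical ring theory rather than a new result. The equivalence \textup{(i)}$\Leftrightarrow$\textup{(ii)} is Bass's characterization of semi-perfect rings, so I would cite \cite[Theorem 27.6]{AF} (or a comparable reference) rather than reproving it. The only non-trivial direction of that equivalence uses the lifting of idempotents modulo the radical to build projective covers of simple modules, and conversely projective covers of $R/\fm_i$ for $\fm_i$ the maximal two-sided ideals force idempotent lifting, so I would at most sketch this pointer.

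For \textup{(i)}$\Rightarrow$\textup{(iii)}, I would use the standard fact that a semi-perfect ring admits a complete set $\{e_1,\dots,e_n\}$ of primitive orthogonal idempotents with $1=e_1+\cdots+e_n$, obtained by lifting a decomposition of $1$ in the semi-simple ring $R/\rad R$. For each primitive idempotent $e_i$, the endomorphism ring $\Endm_R(Re_i)\cong e_iRe_i$ is local (this is precisely what primitivity in a semi-perfect ring gives). Any $P\in\proj R$ is a summand of $R^m$ for some $m$, hence of $\bigoplus_i (Re_i)^{m}$, and the Krull--Schmidt theorem applies because each $Re_i$ has local endomorphism ring; so every object of $\proj R$ decomposes into finitely many indecomposables with local endomorphism rings, which is the definition.

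For \textup{(iii)}$\Rightarrow$\textup{(i)}, I would apply the Krull--Schmidt hypothesis to the regular module $R\in\proj R$ itself, writing $R=Re_1\oplus\cdots\oplus Re_n$ for orthogonal primitive idempotents $e_i$ with $e_iRe_i=\Endm_R(Re_i)$ local. Then each $Re_i/\rad(Re_i)$ is simple (because local endomorphism ring on the projective cover level forces a unique simple top), so $R/\rad R\cong\bigoplus_i Re_i/\rad(Re_i)$ is a finite direct sum of simple $R$-modules, hence semi-simple; and the given orthogonal decomposition shows that the idempotents of $R/\rad R$ coming from this decomposition lift to $R$. A short additional argument (every idempotent of $R/\rad R$ can be written in terms of this fixed primitive decomposition and then lifted piece-wise) completes the idempotent lifting condition.

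Honestly, I do not expect any real obstacle: all three implications are textbook material, and the novelty in the section lies elsewhere (the subsequent promotion of Krull--Schmidt from $\proj\La$ to $\KKb(\proj\La)$ under the completeness plus condition~(F) assumption). The only judgement call is stylistic, namely how much of Bass's theorem to reproduce versus cite; I would opt for a single-sentence citation and spend the written proof on the two implications involving \textup{(iii)}, since those are what the rest of the paper actually needs.
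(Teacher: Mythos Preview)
Your proposal is correct and matches the paper's spirit exactly: the paper's entire proof is the single line ``See \cite[27.6.\ Theorem]{AF} and \cite[Proposition 4.1]{Krause},'' citing Anderson--Fuller for \textup{(i)}$\Leftrightarrow$\textup{(ii)} and Krause for the equivalence with \textup{(iii)}. Your choice to sketch the \textup{(iii)}-implications rather than cite Krause is a harmless stylistic difference; your instinct that the real content of the section lies elsewhere is spot on.
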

\begin{proof}
	See \cite[27.6. Theorem]{AF} and \cite[Proposition 4.1]{Krause}.
\end{proof}

Then we observe the following lemma.

\begin{lemm}\label{lem-pseudo-compact-semi-perfect}
	Let $R$ be a ring and $\fm$ a two-sided ideal of $R$.
	Assume that $R$ is complete and $R/\fm$ is a left artinian ring.
	Then $R$ is semi-perfect and therefore $\proj R$ is Krull-Schmidt.
\end{lemm}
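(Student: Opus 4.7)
The plan is to verify that $R$ is semi-perfect; the Krull--Schmidt property of $\proj R$ then follows immediately from Lemma \ref{lem-semi-perfect-KS}. So I need to establish (i) $R/\rad R$ is semisimple and (ii) idempotents lift modulo $\rad R$.

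First, since $R/\fm$ is left artinian, its Jacobson radical is nilpotent, so let $J \supseteq \fm$ be the two-sided ideal of $R$ with $J/\fm = \rad(R/\fm)$; then $J^{n} \subseteq \fm$ for some $n \geq 1$ and $R/J \cong (R/\fm)/\rad(R/\fm)$ is semisimple. The two chains $\{J^{k}\}_{k \geq 1}$ and $\{\fm^{k}\}_{k \geq 1}$ are mutually cofinal (since $\fm \subseteq J$ and $J^{n} \subseteq \fm$), so the $J$-adic and $\fm$-adic topologies on $R$ coincide. In particular $R$ is $J$-adically complete and $R \cong \varprojlim R/J^{k}$. For any $x \in J$, the partial sums of $\sum_{i \geq 0} x^{i}$ form a Cauchy sequence because $x^{i} \in J^{i}$, so the series converges to an inverse of $1-x$; hence every element of $J$ is quasi-regular and $J \subseteq \rad R$. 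Conversely, $R/J$ being semisimple forces $\rad R \subseteq J$, so $J = \rad R$ and condition (i) is established.

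For condition (ii), given an idempotent $\bar e \in R/J$, I will inductively construct a compatible family of idempotents $e_{k} \in R/J^{k}$ lifting $\bar e$, starting with $e_{1} = \bar e$. Assuming $e_{k}$ has been built, pick any lift $\tilde e \in R/J^{k+1}$ of $e_{k}$; then $\tilde e^{2} - \tilde e \in J^{k}/J^{k+1}$, and since $(J^{k}/J^{k+1})^{2} \subseteq J^{2k}/J^{k+1} = 0$ for $k \geq 1$, the element $\tilde e^{2} - \tilde e$ squares to zero in $R/J^{k+1}$. The standard formula $e_{k+1} := \tilde e + (1 - 2\tilde e)(\tilde e^{2} - \tilde e)$ then produces an idempotent in $R/J^{k+1}$ congruent to $e_{k}$ modulo $J^{k}$. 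Passing to the inverse limit, the compatible family $(e_{k})$ determines an idempotent $e \in R$ lifting $\bar e$, which gives (ii).

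The main technical point is ensuring that $\fm$-adic completeness transfers cleanly to the $J$-adic setting so that the inverse-limit construction of idempotents is valid; once the cofinality of $\{J^{k}\}$ and $\{\fm^{k}\}$ is observed, both parts are routine. With $R$ shown to be semi-perfect, Lemma \ref{lem-semi-perfect-KS} delivers the Krull--Schmidt property of $\proj R$.
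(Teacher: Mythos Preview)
Your proof is correct and follows the same overall strategy as the paper: verify that $R$ is semi-perfect by checking semisimplicity of $R/\rad R$ and lifting of idempotents, then invoke Lemma~\ref{lem-semi-perfect-KS}. The organizational details differ slightly. The paper only shows $\fm\subseteq\rad R$ (via invertibility of $1-b$ for $b\in\fm$) and then lifts idempotents in two stages: first from $R/\rad R$ to $R/\fm$ using that left artinian rings are semi-perfect, and then from $R/\fm$ to $R$ by citing \cite[(6.7) Theorem (i)]{CR} for $\fm$-adically complete rings. You instead pin down $\rad R$ exactly as the preimage $J$ of $\rad(R/\fm)$, use cofinality of $\{\fm^k\}$ and $\{J^k\}$ to pass to the $J$-adic topology, and carry out the idempotent lifting by an explicit successive-approximation argument through the quotients $R/J^k$. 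Your route is more self-contained (no appeal to \cite{CR}) and yields the extra information $\rad R=J$; the paper's route is shorter by outsourcing the lifting step.
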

\begin{proof}
	Since $R$ is complete, for any $b\in\fm$, $1-b$ is an invertible element of $R$.
	Namely, $\fm$ is contained in $\rad R$.
	Since $R/\fm$ is a left artinian ring, $R/\rad R$ is a semi-simple ring and every idempotent in $R/\rad R$ is the image of an idempotent in $R/\fm$.
	By \cite[(6.7) Theorem (i)]{CR},	every idempotent in $R/\fm$ is the image of an idempotent in $R$.
	Thus $R$ is a semi-perfect ring.
	By Lemma \ref{lem-semi-perfect-KS}, $\proj R$ is Krull-Schmidt.
\end{proof}

Next, to state the main theorem, we consider the following setting.
For a ring $\Ga$, we denote by $\Mod\Ga$ the category of left $\Ga$-modules.
Let $\La$ be a subring of $\Ga$.
For a $\Ga$-module $M$, when we regard $M$ as a $\La$-module by an inclusion $\La\subset\Ga$, we write ${}_{\La}M$.
We denote by $\cC_{\La}^{\Ga}$ the full subcategory of $\Mod\Ga$ consisting of $\Ga$-modules $M$ which is finitely generated projective as a $\La$-module, that is,
$$
\cC_{\La}^{\Ga}:=\{ M \in \Mod\Ga \mid {}_{\La}M\in\proj\La \}.
$$

From now on to the end of this section,	assume that $\La$ is complete by a two-sided ideal $\fm$ satisfying the condition (F).
We show the following theorem in this section.

\begin{thm}\label{thm-cC-KS}
	Let $\La$ be a subring of a ring $\Ga$. 
	If $\Ga\fm \subset \fm\Ga$ holds, then $\cC_{\La}^{\Ga}$ is a Krull-Schmidt category.
\end{thm}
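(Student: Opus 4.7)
The plan is to verify that $\cC_\La^\Ga$ is Krull-Schmidt by showing every object decomposes as a finite direct sum of objects with local endomorphism rings. Idempotent-completeness comes for free: a $\Ga$-direct summand of $M\in\cC_\La^\Ga$ is, after restriction to $\La$, a summand of a finitely generated projective $\La$-module, hence itself finitely generated projective.

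For the finiteness of decompositions, I use that $\Ga\fm\subset\fm\Ga$ makes each $\fm^kM$ a $\Ga$-submodule of $M$, so $M/\fm M$ is a $\Ga$-module. Being finitely generated over $\La$, $M/\fm M$ is finitely generated over the left artinian ring $\La/\fm$ (by condition (F)), hence of finite length over $\La/\fm$, and consequently of finite length also as a $\Ga$-module (every $\Ga$-submodule is a $\La$-submodule). Nakayama, valid since $\fm\subset\rad\La$, guarantees that each nonzero summand $M_i$ in a $\Ga$-decomposition of $M$ satisfies $M_i/\fm M_i\ne 0$, bounding the number of summands by the length of $M/\fm M$. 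Iterating gives $M=N_1\oplus\cdots\oplus N_s$ with each $N_i$ indecomposable in $\cC_\La^\Ga$.

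The heart of the proof is to show that $E:=\Endm_\Ga(N)$ is local for indecomposable $N$. I filter $E$ by the two-sided ideals $I_k:=\{f\in E\mid f(N)\subset\fm^kN\}$. Since $N$ is a summand of some $\La^n$ and hence complete in the $\fm$-adic topology, a pointwise-limit argument makes $E$ complete with respect to $\{I_k\}$. The $\La$-linearity of $h\in I_1$ yields $h^k\in I_k$, so the geometric series $\sum_{k\ge 0}h^k$ converges in $E$ and inverts $1-h$; applying the same argument to $gh$ for any $g\in E$ shows $I_1\subset\rad E$. Given $f\in E$, I apply Fitting's lemma to $\bar f$ on the finite-length $\Ga$-module $N/\fm N$: it produces a decomposition $N/\fm N=\Ker\bar f^n\oplus\Image\bar f^n$ whose projection idempotent $\bar e$ is a polynomial in $\bar f$ via the Chinese remainder theorem applied to the minimal polynomial of $\bar f$, hence $\bar e\in K[\bar f]\subset E/I_1$. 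Completeness and $I_1\subset\rad E$ lift $\bar e$ to an idempotent $e\in E$, and indecomposability of $N$ forces $e\in\{0,1\}$.

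If $e=0$ then $\bar f$ is nilpotent, so $f^n\in I_1$ and $f^{nk}\in I_k\to 0$, whence $1-f$ is invertible by the geometric series. If $e=1$ then $\bar f$ is an automorphism of $N/\fm N$; Nakayama makes $f$ surjective, and the short exact sequence $0\to\Ker f\to N\to N\to 0$ splits as $\La$-modules by $\La$-projectivity of $N$, exhibiting $\Ker f$ as a $\La$-summand of $N$ contained in $\fm N$; identifying $\fm(\Ker f)=\Ker f\cap\fm N=\Ker f$ and applying Nakayama once more forces $\Ker f=0$, and the inverse $f^{-1}$ is automatically $\Ga$-linear. Hence either $f$ or $1-f$ is invertible, so $E$ is local. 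The main obstacle I expect is the case $e=1$: extracting invertibility of $f$ from invertibility of $\bar f$ requires combining $\La$-projectivity of $N$ (to split the kernel as a summand) with two applications of Nakayama; a subtler but easier check is that the Fitting idempotent $\bar e$ really lies in the subring $E/I_1$ and not merely in the ambient $\Endm_\Ga(N/\fm N)$.
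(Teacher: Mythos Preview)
Your overall architecture is sound—the filtration $I_k$, completeness of $E$, the inclusion $I_1\subset\rad E$, idempotent lifting along a complete radical ideal, and the two endgame cases ($\bar f$ nilpotent versus $\bar f$ bijective) are all handled correctly. The gap is precisely the step you flagged as ``subtler but easier'': the claim that the Fitting idempotent $\bar e$ lies in $K[\bar f]\subset E/I_1$.

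The theorem is stated for an arbitrary complete ring $\La$ satisfying (F); no base field is assumed (cf.\ Example~\ref{exam-pseudo-compact-ring}(ii), where $R$ is any complete local noetherian ring). Without a field $K$ that is central in $\Ga$ and over which $N/\fm N$ is finite-dimensional, $\bar f$ has no minimal polynomial with coefficients available inside $E$, and the Chinese remainder argument does not apply. Over $\mathbb{Z}$ the Fitting projection is generally \emph{not} a polynomial in $\bar f$: already for a diagonal matrix $\mathrm{diag}(0,\lambda)$ over a field, the projection is $\lambda^{-1}\bar f$, which lies in $\mathbb{Z}[\bar f]$ only when $\lambda=\pm1$. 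And since $N$ is projective only over $\La$, not over $\Ga$, the reduction map $E\to\Endm_\Ga(N/\fm N)$ need not be surjective, so one cannot simply lift $\bar e$ from the ambient endomorphism ring either. If you add the hypothesis that $K$ is a field contained in $Z(\Ga)$ with $\La/\fm$ finite-dimensional over $K$ (which does hold in the paper's application to preprojective algebras), your argument goes through verbatim.

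The paper circumvents this obstruction by a different mechanism: instead of lifting an idempotent from the single quotient $N/\fm N$, it applies Fitting's lemma at \emph{every} level $N/\fm^iN$, obtaining compatible $\Ga$-module decompositions $N/\fm^iN\simeq I_i\oplus K_i$, and then passes to the inverse limit to produce a $\Ga$-decomposition $N\simeq I\oplus K$ with $f|_I$ invertible and $\id_K-f|_K$ invertible. This never asks whether the Fitting projection comes from $E$; it builds the summands directly as $\Ga$-modules. Your proof can be repaired along the same lines.
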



Before proving Theorem \ref{thm-cC-KS}, we apply this theorem to show that, for a complete ring $\La$ which satisfies the condition (F), the homotopy category $\KKb(\proj\La)$ of finitely generated projective $\La$-modules  is Krull-Schmidt.

The following lemma directly follows from the definition of Krull-Schmidt categories.

\begin{lemm}\label{lem-KS-quotient}
	Let $F : \mathcal{B} \to \mathcal{C}$ be a full dense additive functor between additive categories $\mathcal{B}, \mathcal{C}$.
	If $\mathcal{B}$ is Krull-Schmidt, then so is $\mathcal{C}$.
\end{lemm}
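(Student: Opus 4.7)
The plan is to take an arbitrary object of $\mathcal{C}$, pull it back along $F$ to $\mathcal{B}$, decompose it there, and then push the decomposition down. More precisely, given $C \in \mathcal{C}$, density provides some $B \in \mathcal{B}$ with $F(B) \cong C$. Since $\mathcal{B}$ is Krull-Schmidt, write $B \cong B_{1} \oplus \cdots \oplus B_{n}$ with each $\Endm_{\mathcal{B}}(B_{i})$ local. Because $F$ is additive, we obtain $C \cong F(B_{1}) \oplus \cdots \oplus F(B_{n})$ in $\mathcal{C}$, so the only thing left to check is that each nonzero $F(B_{i})$ has a local endomorphism ring.

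The key step is to exploit fullness. For each $i$, the induced ring homomorphism $\Endm_{\mathcal{B}}(B_{i}) \to \Endm_{\mathcal{C}}(F(B_{i}))$ is surjective by fullness, so $\Endm_{\mathcal{C}}(F(B_{i}))$ is a quotient of the local ring $\Endm_{\mathcal{B}}(B_{i})$ by a two-sided ideal. A standard fact states that such a quotient is either the zero ring or again a local ring: the preimage of any proper ideal lies in the unique maximal ideal of $\Endm_{\mathcal{B}}(B_{i})$, forcing a unique maximal ideal in the quotient. Hence $\Endm_{\mathcal{C}}(F(B_{i}))$ is either zero or local.

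Finally, if $\Endm_{\mathcal{C}}(F(B_{i})) = 0$ then $\id_{F(B_{i})} = 0$, so $F(B_{i})$ is a zero object and can be dropped from the sum without changing the isomorphism class of $C$. After discarding these trivial summands, the remaining summands have local endomorphism rings, which is exactly the Krull-Schmidt property for $C$. I do not foresee any serious obstacle: the only subtlety is keeping track of the possibility that $F$ collapses some $B_{i}$ to zero, and this is handled by the observation above.
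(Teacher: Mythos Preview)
Your argument is correct and is precisely the natural unfolding of what the paper intends: the paper does not spell out a proof at all, merely stating that the lemma ``directly follows from the definition of Krull-Schmidt categories.'' Your proof supplies the standard details---lifting along density, decomposing in $\mathcal{B}$, pushing down via additivity, and using fullness to see that each $\Endm_{\mathcal{C}}(F(B_i))$ is a quotient of a local ring, hence local or zero---which is exactly how one justifies that claim.
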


Then we have the following corollary of Theorem \ref{thm-cC-KS}.
We denote by $\CCb(\proj\La)$ the category of bounded complexes of $\proj\La$.

\begin{cor}\label{cor-CCb-KKb-KS}
	We have the following statements.
	\begin{itemize}
		\item[(a)]
		$\CCb(\proj\La)$ is a Krull-Schmidt category.
		\item[(b)]
		$\KKb(\proj\La)$ is a Krull-Schmidt category.
	\end{itemize}
\end{cor}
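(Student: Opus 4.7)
The plan is to handle the two parts separately, with (a) being the substantive step and (b) a formal consequence.

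For (a), I would reduce to Theorem \ref{thm-cC-KS} by realizing bounded complexes in $\proj\La$ as modules over an auxiliary ring. For each integer $n\geq 0$, write $\CCb_{[-n,n]}(\proj\La)$ for the full subcategory of $\CCb(\proj\La)$ of complexes supported in degrees $[-n,n]$. I would construct a ring $\Ga_n$ containing $\La$ as a subring, generated over $\La$ by orthogonal idempotents $e_{-n},\ldots,e_n$ summing to $1$ and ``arrows'' $a_i\in e_{i+1}\Ga_n e_i$ for $-n\leq i\leq n-1$, subject to the relations $a_{i+1}a_i=0$ together with the requirement that the $e_i$ and $a_j$ all commute with $\La$ (one can think of $\Ga_n$ as the ``linear $A_{2n+1}$'' path algebra over $\La$ with zero-composition relations). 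Under this presentation, a left $\Ga_n$-module $M$ decomposes as $\bigoplus_i e_iM$ with $\La$-linear differentials $a_i\colon e_iM\to e_{i+1}M$ satisfying $a_{i+1}a_i=0$, which is precisely a bounded complex concentrated in degrees $[-n,n]$; it lies in $\cC_\La^{\Ga_n}$ exactly when each $e_iM$ is finitely generated projective over $\La$. This yields an equivalence $\cC_\La^{\Ga_n}\simeq\CCb_{[-n,n]}(\proj\La)$.

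Next I would check the hypothesis of Theorem \ref{thm-cC-KS}: by construction $\fm$ commutes with every generator $e_i$ and $a_j$ of $\Ga_n$ over $\La$, so in fact $\Ga_n\fm=\fm\Ga_n$. Theorem \ref{thm-cC-KS} then shows that $\CCb_{[-n,n]}(\proj\La)$ is Krull-Schmidt for every $n$. Since every object of $\CCb(\proj\La)$ lies in some $\CCb_{[-n,n]}(\proj\La)$ and morphism spaces (and hence endomorphism rings) of a fixed object coincide in $\CCb_{[-n,n]}(\proj\La)$ and in $\CCb(\proj\La)$, the decomposition into indecomposables with local endomorphism rings is inherited, proving (a).

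For (b), the canonical quotient functor $F\colon\CCb(\proj\La)\to\KKb(\proj\La)$ is the identity on objects (hence dense) and is surjective on morphism sets by construction of the homotopy category (hence full). Lemma \ref{lem-KS-quotient} applied to $F$ then transfers the Krull-Schmidt property from (a) to $\KKb(\proj\La)$.

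The main obstacle is the construction of $\Ga_n$: one must describe it carefully enough to make both the equivalence $\cC_\La^{\Ga_n}\simeq\CCb_{[-n,n]}(\proj\La)$ and the commutation $\Ga_n\fm=\fm\Ga_n$ immediate. The subtlety is that $\La$ is not assumed to be a $K$-algebra in Section \ref{Krull-Schmidt property}, so $\Ga_n$ cannot simply be written as a tensor product of $\La$ with a finite-dimensional algebra; one must instead present $\Ga_n$ intrinsically as an $\La$-bimodule-ring with the prescribed idempotents and arrows commuting with $\La$.
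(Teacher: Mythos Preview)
Your proposal is correct and follows essentially the same approach as the paper. The paper resolves the construction issue you flag at the end by taking $\Ga_n$ concretely as the quotient of the $n\times n$ lower triangular matrix ring $\mathsf{T}_n(\La)$ by the ideal generated by the matrix units $E_{i+2,i}$ (for $1\le i\le n-2$); this makes both the identification $\cC_\La^{\Ga_n}\simeq\CCb_{[1,n]}(\proj\La)$ and the equality $\Ga_n\fm=\fm\Ga_n$ immediate, with no need to worry about whether $\La$ is a $K$-algebra.
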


\begin{proof}
	(a)
	For an integer $n\geq 1$, we denote by $\mathsf{T}_{n}(\La)$ the $n\times n$ lower triangular matrix ring over $\La$ and denote by $E_{i,j}$ the $(i,j)$-matrix unit of $\mathsf{T}_{n}(\La)$.
	If $n=1,2$, then let $\Ga_{n}:=\mathsf{T}_{n}(\La)$.
	If $n\geq 3$, then let $\Ga_{n}$ be the factor ring of $\mathsf{T}_{n}(\La)$ modulo the ideal generated by $E_{i+2,i}$ for $i=1,\ldots,n-2$, that is, 
	
	$$\Ga_{n}=\left(
\begin{array}{cccc}
\La&&&O\\
\La&\La&&\\
&\ddots&\ddots&\\
O&&\La&\La
\end{array}
\right).$$
	
	Then $\La$ is a subring of $\Ga_{n}$ and $\fm\Ga_{n}=\Ga_{n}\fm$ holds.
	By Theorem \ref{thm-cC-KS}, $\cC_{\La}^{\Ga_{n}}$ is a Krull-Schmidt category.
	It is easy to show that there exists a fully faithful functor
	\begin{align*}
	\rho_{n} : \cC_{\La}^{\Ga_{n}} \longrightarrow \CCb(\proj\La),
	\end{align*}
	given by $M \mapsto (e_{1}M \xto{f_{1}} e_{2}M \xto{f_{2}} \cdots \xto{f_{n-1}} e_{n}M)$, where $e_{i}$ is an idempotent of $\Ga_{n}$ associated to $E_{i,i}$ and $f_{i}$ is the multiplication map of $E_{i+1,i}$ from the left.
	Then it is also easy to show that any object $X\in\CCb(\proj\La)$ is isomorphic to $\rho_{n}(M)$ for some integer $n\geq 1$ and some $M\in\cC_{\La}^{\Ga_{n}}$ up to shift.
	Since $\cC_{\La}^{\Ga_{n}}$ is Krull-Schmidt, $X$ decomposes into a finite direct sum of objects having local endomorphism rings.
	Therefore, $\CCb(\proj\La)$ is a Krull-Schmidt category.
	
	(b)
	By the definition of homotopy categories, there exists a full dense functor from $\CCb(\proj\La)$ to $\KKb(\proj\La)$.
	Thus $\KKb(\proj\La)$ is a Krull-Schmidt category by (a) and Lemma \ref{lem-KS-quotient}.
\end{proof}

From now on, we show Theorem \ref{thm-cC-KS}.
We begin with the following lemma.

\begin{lemm}\label{lem-cC-decompose}
	Let $\La$ be a subring of a ring $\Ga$.
	Assume that $\proj\La$ is a Krull-Schmidt category.
	Then each object of $\cC_{\La}^{\Ga}$ decomposes into a finite direct sum of indecomposable objects in $\cC_{\La}^{\Ga}$.
\end{lemm}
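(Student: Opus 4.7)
The plan is to bound the length of any direct sum decomposition of a given $M\in\cC_\La^\Ga$ by the length of the Krull-Schmidt decomposition of its underlying $\La$-module ${}_\La M$, and then choose a decomposition of maximal length.

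First I would invoke the hypothesis that $\proj\La$ is Krull-Schmidt. Since ${}_\La M$ lies in $\proj\La$, it admits a decomposition ${}_\La M\cong X_1\oplus\cdots\oplus X_n$ into indecomposable $\La$-modules, and the integer $n$ is an invariant of ${}_\La M$ by uniqueness of the decomposition (up to isomorphism and permutation of the summands).

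Next, suppose $M=M_1\oplus\cdots\oplus M_k$ is any direct sum decomposition in $\cC_\La^\Ga$ with all $M_i\neq 0$. Restriction of scalars yields a decomposition ${}_\La M={}_\La M_1\oplus\cdots\oplus{}_\La M_k$ with each ${}_\La M_i$ a nonzero object of $\proj\La$. Each ${}_\La M_i$ therefore admits at least one indecomposable $\La$-summand, so further decomposing produces a refinement of ${}_\La M$ into at least $k$ indecomposable pieces; by Krull-Schmidt in $\proj\La$ this refinement must coincide with $X_1\oplus\cdots\oplus X_n$ up to isomorphism, giving $k\leq n$.

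Finally, I would choose a decomposition $M=M_1\oplus\cdots\oplus M_k$ in $\cC_\La^\Ga$ with $k$ maximal; the bound $k\leq n$ ensures such a maximum exists, and by maximality every $M_i$ must be indecomposable in $\cC_\La^\Ga$. This yields the desired finite decomposition. I do not anticipate any serious obstacle in this lemma: the Krull-Schmidt assumption on $\proj\La$ produces the bound automatically, and the harder content of Theorem \ref{thm-cC-KS}, namely the locality of endomorphism rings of indecomposables in $\cC_\La^\Ga$, is left to subsequent steps.
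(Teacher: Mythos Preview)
Your proof is correct and follows essentially the same approach as the paper: both use the number $n$ of indecomposable summands in the Krull--Schmidt decomposition of ${}_{\La}M$ in $\proj\La$ to control decompositions of $M$ in $\cC_{\La}^{\Ga}$. The only cosmetic difference is that the paper phrases the argument as an induction on $n$, while you pick a decomposition of maximal length bounded by $n$; these are equivalent packagings of the same idea.
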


\begin{proof}
	Let $M\in\cC_{\La}^{\Ga}$.
	Since $\proj\La$ is Krull-Schmidt, ${}_{\La}M$ uniquely decomposes into a finite direct sum of indecomposable projective $\La$-modules.
	We show the assertion by an induction of the number of indecomposable direct summands of ${}_{\La}M$.

	It is clear that, for $M, N \in \cC_{\La}^{\Ga}$, if $M\simeq N$, then ${}_{\La}M \simeq {}_{\La}N$ holds.
	If $M$ is indecomposable, there is nothing to show. 
	Assume that $M$ and hence ${}_{\La}M$ are decomposable, and the number of indecomposable direct summands of ${}_{\La}M$ is $n$.
	Then we have $M\simeq M^{\prime}\oplus M^{\prime\prime}$ for some $M^{\prime}, M^{\prime\prime}\in\cC_{\La}^{\Ga}$ and the numbers of indecomposable direct summands of ${}_{\La}M^{\prime}$ and ${}_{\La}M^{\prime\prime}$ are smaller than $n$.
	By the induction hypothesis, $M^{\prime}$ and $M^{\prime\prime}$ decompose into finite direct sums of indecomposable objects in $\cC_{\La}^{\Ga}$.
\end{proof}

The next lemma is well-known as Fitting-Lemma.

\begin{lemm}\label{lem-Fitting}
	Let $R$ be a ring and $M$ be a finite length $R$-module.
	Then for each $R$-morphism $f : M \to M$, there exists an integer $n>0$ such that $M\simeq\Im(f^{n})\oplus \Ker(f^{n})$.
	In this case, we have $\Im(f^{n})=\Im(f^{n+i})$ and $\Ker(f^{n})=\Ker(f^{n+i})$ for any $i>0$.
\end{lemm}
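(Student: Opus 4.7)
The plan is the textbook proof of Fitting's Lemma. I would start by considering the descending chain of images $\Im(f) \supseteq \Im(f^{2}) \supseteq \Im(f^{3}) \supseteq \cdots$ and the ascending chain of kernels $\Ker(f) \subseteq \Ker(f^{2}) \subseteq \Ker(f^{3}) \subseteq \cdots$ in $M$. Since $M$ has finite length, both chains stabilize; taking $n$ to be the larger of the two stabilization indices immediately yields $\Im(f^{n})=\Im(f^{n+i})$ and $\Ker(f^{n})=\Ker(f^{n+i})$ for every $i>0$, which gives the second assertion of the lemma for free.

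It remains to establish the internal direct sum decomposition $M = \Im(f^{n}) \oplus \Ker(f^{n})$. For the triviality of the intersection, if $x \in \Im(f^{n})\cap\Ker(f^{n})$, I would write $x = f^{n}(y)$ and apply $f^{n}$ to get $f^{2n}(y)=f^{n}(x)=0$, so $y\in\Ker(f^{2n})=\Ker(f^{n})$, forcing $x = f^{n}(y) = 0$. For the equality $M=\Im(f^{n})+\Ker(f^{n})$, given $x\in M$ the element $f^{n}(x)$ lies in $\Im(f^{n})=\Im(f^{2n})$, so there exists $y\in M$ with $f^{n}(x)=f^{2n}(y)$; then $x=f^{n}(y) + (x-f^{n}(y))$ is the required decomposition, since $f^{n}(x-f^{n}(y))=f^{n}(x)-f^{2n}(y)=0$ places the second summand in $\Ker(f^{n})$.

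There is no real obstacle here beyond the observation that the two chains must be synchronized by taking a common bound; the rest is a standard diagram chase on iterates. In particular, no use is made of the completeness of $\La$, of the ideal $\fm$, or of the condition (F) in this lemma itself — those hypotheses will only enter when Fitting's Lemma is subsequently combined with the semi-perfectness of $\La$ to produce the Krull-Schmidt property for $\cC_{\La}^{\Ga}$.
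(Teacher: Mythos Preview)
Your proof is correct and is the standard textbook argument for Fitting's Lemma. The paper itself does not supply a proof of this lemma at all: it simply introduces it with the sentence ``The next lemma is well-known as Fitting-Lemma'' and states the result, so there is nothing to compare your approach against. Your closing remark about the hypotheses on $\La$, $\fm$, and condition (F) playing no role here is also accurate; those enter only in the subsequent Lemma~\ref{lem-end-local-CCb}, where Fitting's Lemma is applied to the finite-length quotients $M/\fm^{i}M$.
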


Then we give the following key lemma, which is a generalization of Lemma \ref{lem-Fitting}.

\begin{lemm}\label{lem-end-local-CCb}
	Let $\La$ be a subring of a ring $\Ga$.
	If $\Ga\fm \subset \fm\Ga$ holds,
	then for each $M\in\cC_{\La}^{\Ga}$ and each $f\in\Hom_{\Ga}(M,M)$, there exist $I, K\in\cC_{\La}^{\Ga}$ such that $M\simeq I \oplus K$, $f(I)\subset I$, $f(K)\subset K$, $f|_{I}$ is an isomorphism on $I$ and $({\rm id}_{K}-f|_{K})$ is an isomorphism on $K$.
	In particular, if $M$ is indecomposable, then its endomorphism algebra $\Endm_{\Ga}(M)$ is local.
\end{lemm}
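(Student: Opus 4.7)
The plan is to reduce to Fitting's lemma on the $\La$-finite-length quotients $M/\fm^{i}M$ and then glue the resulting decompositions via the inverse limit, exploiting the completeness of $\La$ (and hence of $M$) with respect to the $\fm$-adic topology. From $\Ga\fm\subset\fm\Ga$ one inductively gets $\Ga\fm^{i}\subset\fm^{i}\Ga$, so $\fm^{i}M$ is a $\Ga$-submodule of $M$ for every $i\geq 1$. Because ${}_{\La}M$ is a direct summand of some $\La^{r}$, the quotient $M/\fm^{i}M$ is a direct summand of $(\La/\fm^{i})^{r}$ as a $\La/\fm^{i}$-module, and hence of finite length as a $\La$-module; moreover, since $\La$ is complete and ${}_{\La}M\in\proj\La$, the canonical map $M\to\varprojlim_{i}M/\fm^{i}M$ is an isomorphism.

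Next I would let $\bar{f}_{i}\colon M/\fm^{i}M\to M/\fm^{i}M$ be the $\Ga$-linear map induced by $f$ and apply Lemma~\ref{lem-Fitting} to each $\bar{f}_{i}$: there is an integer $n_{i}\geq 1$, chosen non-decreasing in $i$ and large enough at each level, such that $M/\fm^{i}M = I_{i}\oplus K_{i}$ with $I_{i} := \Im(\bar{f}_{i}^{n_{i}})$ and $K_{i} := \Ker(\bar{f}_{i}^{n_{i}})$, where $\bar{f}_{i}|_{I_{i}}$ is an isomorphism and $\bar{f}_{i}|_{K_{i}}$ is nilpotent. Both summands are $\Ga$-submodules of $M/\fm^{i}M$, and with this choice of the $n_{i}$ the canonical projection $M/\fm^{i+1}M\to M/\fm^{i}M$ intertwines $\bar{f}_{i+1}$ with $\bar{f}_{i}$ and restricts to surjections $I_{i+1}\to I_{i}$ and $K_{i+1}\to K_{i}$.

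Setting $I := \varprojlim_{i} I_{i}$ and $K := \varprojlim_{i} K_{i}$, the termwise split short exact sequences $0\to K_{i}\to M/\fm^{i}M\to I_{i}\to 0$ pass to the limit (the Mittag--Leffler condition holds since each $K_{i}$ is artinian) to yield $M\simeq I\oplus K$. By construction $I$ and $K$ are $\Ga$-submodules of $M$ that are also direct summands of ${}_{\La}M\in\proj\La$, so $I,K\in\cC_{\La}^{\Ga}$; $f$ preserves both; $f|_{I}$ is an isomorphism, since this holds at every level $I_{i}$; and the inverse of $\id_{K_{i}}-\bar{f}_{i}|_{K_{i}}$ is given by a finite geometric series, these finite sums being compatible across the inverse system, so that $\id_{K}-f|_{K}$ is invertible.

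The principal technical point is the compatibility and surjectivity of the Fitting decompositions across the inverse system, which is precisely what justifies passing to the limit without losing the direct summand property; given the finite length of each level, this is a routine Mittag--Leffler argument. Once the decomposition $M\simeq I\oplus K$ is in hand, the local-ness of $\Endm_{\Ga}(M)$ for indecomposable $M$ is immediate: one of $I,K$ must equal $M$ and the other must vanish, so for every $f$ at least one of $f,\id-f$ is invertible, which is the standard characterization of a local ring.
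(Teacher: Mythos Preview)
Your proposal is correct and follows essentially the same approach as the paper: reduce to Fitting's lemma on the finite-length quotients $M/\fm^{i}M$, obtain compatible decompositions $I_{i}\oplus K_{i}$ with $n_{i}$ chosen increasing, and pass to the inverse limit using completeness of $M$. The paper phrases the limit step as ``inverse limits commute with the direct sum $I_{i}\oplus K_{i}$'' rather than via Mittag--Leffler on split short exact sequences, but this is only a cosmetic difference.
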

\begin{proof}
	Let $M\in\cC_{\La}^{\Ga}$ and $f\in\Hom_{\Ga}(M,M)$ and $i > 0$ be an integer.
	Since $\Ga\fm \subset \fm\Ga$ holds, $M/\fm^{i}M$ is a left $\Ga$-module, and $f$ induces a morphism $f_{i} : M/\fm^{i}M \to M/\fm^{i}M$ of $\Ga$-modules.
	Thus we have the following commutative diagram
	\begin{equation*}
	\begin{tikzcd}
	\cdots \ar[r] & M/\fm^{i+1}M \ar[r, "p_{i+1}"] \ar[d, "f_{i+1}"] & M/\fm^{i}M \ar[r, "p_{i}"] \ar[d, "f_{i}"] & M/\fm^{i-1}M \ar[r] \ar[d, "f_{i-1}"] & \cdots \\
	\cdots \ar[r] & M/\fm^{i+1}M \ar[r, "p_{i+1}"] & M/\fm^{i}M \ar[r, "p_{i}"] &M/\fm^{i-1}M \ar[r] & \cdots,
	\end{tikzcd}
	\end{equation*}
	where $p_{i}$ is a canonical morphism.
	Since $\La$ satisfies the condition (F) and $M$ is in $\cC_{\La}^{\Ga}$, $M/\fm^{i}M$ has finite length as a $\La$-module.
	By Lemma \ref{lem-Fitting}, there exists an integer $n_{i}>0$ satisfying the following equalities and an isomorphism
	\begin{align}
	\Im((f_{i})^{n_{i}})=\Im((f_{i})^{n_{i}+j}) \label{equ-Im} \\
	\Ker((f_{i})^{n_{i}})=\Ker((f_{i})^{n_{i}+j}) \label{equ-Ker} \\
	M/\fm^{i}M \simeq \Im( (f_{i})^{n_{i}})\oplus \Ker( (f_{i})^{n_{i}}) \notag
	\end{align}
	for any integer $j>0$.
	Define $\Ga$-modules $I_{i}:=\Im((f_{i})^{n_{i}})$ and $K_{i}:=\Ker((f_{i})^{n_{i}})$.
	By equalities (\ref{equ-Im}) and (\ref{equ-Ker}), we have a commutative diagram
	\begin{equation}\label{CommDiagramf}
	\begin{tikzcd}
	M/\fm^{i}M \ar[r, "\simeq"] \ar[d, "f_{i}"] & I_{i} \oplus K_{i} \ar[d, "f_{i}\mid_{I_i}\oplus f_{i}\mid_{K_i}"] \\
	M/\fm^{i}M \ar[r, "\simeq"] & I_{i} \oplus K_{i}.
	\end{tikzcd}
	\end{equation}
	Note that $f_{i}\mid_{I_i}$ is an isomorphism and $f_{i}\mid_{K_i}$ is nilpotent.
	
	We can assume that $n_{i+1}>n_{i}$ for any $i>0$.
	Thus there exist natural morphisms $\phi_{i+1} : I_{i+1} \to I_{i}$ and $\psi_{i+1} : K_{i+1} \to K_{i}$, and we have inverse systems $(I_{i}, \phi_{i})$ and $(K_{i}, \psi_{i})$.
	Now we have the following commutative diagram
	\begin{equation}\label{CummDiagramp}
	\begin{tikzcd}
	M/\fm^{i}M \ar[r, "\simeq"] \ar[d, "p_{i}"] & I_{i} \oplus K_{i} \ar[d, "\phi_{i}\oplus \psi_{i}"] \\
	M/\fm^{i-1}M \ar[r, "\simeq"] & I_{i-1} \oplus K_{i-1}.
	\end{tikzcd}
	\end{equation}
	Since ${}_{\La}M$ is a finitely generated projective $\La$-module, an inverse system $(M/\fm^{i}M, p_{i})$ satisfies $M \simeq \varprojlim(M/\fm^{i}M, p_{i})$.
	Let $I:=\varprojlim(I_{i}, \phi_{i})$ and $K:=\varprojlim(K_{i}, \psi_{i})$.
	By the commutative diagram (\ref{CummDiagramp}), we have $M\simeq I \oplus K$.
	It is easy to see that $\cC_{\La}^{\Ga}$ is closed under direct summands  in $\Mod\Ga$.
	Thus $I$ and $K$ belong to $\cC_{\La}^{\Ga}$.
	By the commutative diagram (\ref{CommDiagramf}), $f|_{I}=\varprojlim(f_{i}\mid_{I_i})$ and  $f|_{K}=\varprojlim(f_{i}\mid_{K_i})$ hold.
	Thus we have $f(I)\subset I$ and $f(K)\subset K$.
	Since $f_{i}\mid_{I_i}$ is an isomorphism for any $i>0$, $f|_{I}$ is also an isomorphism.
	Similarly since $(\id\mid_{K}-f_{i}\mid_{K_i})$ is an isomorphism for any $i>0$, $(\id_{K}-f|_{K})$ is also an isomorphism.
\end{proof}

Then Theorem \ref{thm-cC-KS} directly follows from Lemmas \ref{lem-cC-decompose} and \ref{lem-end-local-CCb}.

We end this section by giving one observation.
Let $R$ be a ring.
We denote by $\fp R$ the full subcategory of $\Mod R$ consisting of finitely presented $R$-modules.
Then we have the following result. 
\begin{lemm}\label{lem-fp-KS}
	Let $R$ be a ring.
	If $\KKb(\proj R)$ is Krull-Schmidt, then so is $\fp R$.
\end{lemm}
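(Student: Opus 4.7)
The plan is to apply Lemma \ref{lem-KS-quotient} to the cokernel functor
$$F : \K^{[-1,0]}(\proj R) \longrightarrow \fp R, \quad (P^{1} \xrightarrow{d} P^{0}) \longmapsto \Coker(d),$$
where $\K^{[-1,0]}(\proj R)$ denotes the full subcategory of $\KKb(\proj R)$ of complexes concentrated in degrees $-1$ and $0$. Density of $F$ is immediate from the definition of $\fp R$: every finitely presented module admits a projective presentation. Fullness follows by the usual lifting argument: given $\phi : \Coker(d) \to \Coker(d')$ with $d : P^1 \to P^0$ and $d' : Q^1 \to Q^0$, projectivity of $P^0$ and surjectivity of $Q^0 \twoheadrightarrow \Coker(d')$ yield a lift $P^0 \to Q^0$, and then projectivity of $P^1$ yields a compatible $P^1 \to Q^1$.

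By Lemma \ref{lem-KS-quotient} it then suffices to show that $\K^{[-1,0]}(\proj R)$ is Krull-Schmidt. Since $\KKb(\proj R)$ is Krull-Schmidt by hypothesis and $\K^{[-1,0]}(\proj R)$ is a full subcategory, the task reduces to showing that every object $T \in \K^{[-1,0]}(\proj R)$ decomposes in $\K^{[-1,0]}(\proj R)$ into indecomposables with local endomorphism rings; equivalently, that $\K^{[-1,0]}(\proj R)$ is closed under direct summands taken in $\KKb(\proj R)$. Once this is established, applying Lemma \ref{lem-KS-quotient} to $F$ gives the Krull-Schmidt property for $\fp R$.

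The main obstacle is this closure under summands, which I plan to handle using minimal representatives. First, $\KKb(\proj R)$ being Krull-Schmidt forces $\proj R$ to be Krull-Schmidt (viewing f.g.\ projectives as stalk complexes in degree $0$ and using that a summand of a stalk complex is again a stalk complex by considering cohomology). Consequently, every object of $\KKb(\proj R)$ admits a minimal representative whose differentials lie in $\rad R$ acting on the target, and two minimal complexes are isomorphic in $\KKb(\proj R)$ if and only if they are isomorphic as chain complexes. A two-term complex has a two-term minimal representative (the removable contractible summands are of the form $P \xrightarrow{\simeq} P$ in degrees $-1, 0$), and a direct summand in $\KKb(\proj R)$ of a minimal complex corresponds, via uniqueness of minimal representatives, to a genuine direct summand in $\CCb(\proj R)$, hence is concentrated in the same degrees. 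Combining these facts yields that any summand in $\KKb(\proj R)$ of an object of $\K^{[-1,0]}(\proj R)$ again lies in $\K^{[-1,0]}(\proj R)$, completing Step 2 and thereby the proof.
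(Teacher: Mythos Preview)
Your overall architecture matches the paper's exactly: restrict to the full subcategory $\mathcal{K}=\K^{[-1,0]}(\proj R)$ of two-term complexes, show it is Krull--Schmidt, and then apply Lemma~\ref{lem-KS-quotient} to the full dense functor $H^0\colon\mathcal{K}\to\fp R$. The only genuine divergence is in how you establish that $\mathcal{K}$ is closed under direct summands in $\KKb(\proj R)$.

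The paper does this in one line by observing that membership in $\mathcal{K}$ is detected by certain $\Hom$-vanishing conditions against $R$ and its shifts; since such conditions are automatically inherited by direct summands, closure is immediate. Your route instead passes through semi-perfectness of $R$ (equivalently, $\proj R$ Krull--Schmidt) and the machinery of minimal complexes. This is correct, but heavier: you must first argue that $\proj R$ is Krull--Schmidt, and your justification ``by considering cohomology'' is not quite the right one---cohomology vanishing alone does not force a bounded projective complex to be a stalk. The clean argument is that an idempotent on a stalk complex $P$ lives in $\Endm_{\KKb}(P)=\Endm_R(P)$ and hence already splits inside $\proj R$. Once $R$ is semi-perfect, your minimal-representative argument goes through.

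So both proofs are valid; the paper's $\Hom$-vanishing criterion buys you closure under summands without any detour through semi-perfectness or minimal complexes, while your approach makes explicit the (true, and independently useful) fact that the hypothesis forces $R$ to be semi-perfect.
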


\begin{proof}
	Let $\mathcal{B}$ be a full subcategory of a Krull-Schmidt category $\mathcal{C}$.
	It is easy to see that if $\mathcal{B}$ is closed under direct sums and direct summands in $\mathcal{C}$, then $\mathcal{B}$ is a Krull-Schmidt category.
	
	We denote by $\mathcal{K}$ the subcategory of $\KKb(\proj R)$ consisting of two-term complexes.
	For any $X\in\KKb(\proj R)$, $X$ belongs to $\mathcal{K}$ if and only if $\Hom(R,X[i])=0$ and $\Hom(X,R[i-1])=0$ for any $i>0$.
	Because of this characterization, $\mathcal{K}$ is closed under direct sums and direct summands in $\KKb(\proj R)$.
	Therefore by the above argument, $\mathcal{K}$ is Krull-Schmidt.
	We denote by $H^{0} : \mathcal{K} \to \fp R$ the functor which takes the degree zero homology.
	Then it is easy to see that $H^{0}$ is full and dense.
	Thus by Lemma \ref{lem-KS-quotient}, $\fp R$ is Krull-Schmidt.
\end{proof}

\appendix
\section{Rings whose silting complexes are tilting \\ by Osamu Iyama}

The aim of this section is to prove the following observation.

\begin{prop}\label{silt=tilt2}
	Let $\Lambda$ be a preprojective algebra (or, complete preprojective algebra) of affine type over an arbitrary field. Then all silting complexes of $\Lambda$ are tilting.
\end{prop}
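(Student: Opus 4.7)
The plan is to reduce the general silting case to the two-term classification (Theorem~\ref{classify}) by showing that every silting complex of $\Lambda$ is actually two-term (up to isomorphism), via the $g$-vector chamber geometry from Section~3. I will focus on the complete preprojective algebra over an algebraically closed field; the non-complete and arbitrary-field versions will follow by standard base-change.

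Let $T = T_{1}\oplus\cdots\oplus T_{n} \in \silt\Lambda$ be silting with indecomposable summands. By the general theory of silting objects \cite{AI}, the classes $[T_{1}],\dots,[T_{n}]$ form a $\mathbb{Z}$-basis of $K_{0}(\KKb(\proj\Lambda))$, so the cone $C(T) := \sum_{i}\mathbb{R}_{\geq 0}[T_{i}] \subset V^{\ast}$ is a full-dimensional simplicial cone with nonempty open interior $C(T)^{\circ}$. The key ingredient beyond the body of the paper is a cone-disjointness statement for general (not necessarily two-term) silting complexes: for distinct $T, U \in \silt\Lambda$, one has $C(T)^{\circ} \cap C(U)^{\circ} = \emptyset$. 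This extends Theorem~\ref{injective}(b) beyond the two-term setting, and I expect Iyama to establish it by a mutation induction on the ``length'' of $T$ using the exchange triangles of \cite{AI}, or by invoking analogous density/disjointness results in the silting literature.

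Given this disjointness, the argument finishes quickly using Proposition~\ref{cover}. Since each $C(P_{w})$ and $C(R_{w})$ is a full-dimensional simplicial cone, the union $\bigcup_{w\in W}C(P_{w})^{\circ}\cup\bigcup_{w\in W}C(R_{w})^{\circ}$ is an open dense subset of $V^{\ast}$, its complement being contained in the countable union of boundary hyperplanes $H_{\alpha}$ (the chambers tile the interiors of the two Tits cones, whose union is dense in $V^\ast$ by Proposition~\ref{cover}). The nonempty open set $C(T)^{\circ}$ must therefore meet some $C(P_{w})^{\circ}$ or some $C(R_{w})^{\circ}$, and cone-disjointness forces $T \cong P_{w}$ or $T \cong R_{w}$. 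In either case $T$ is two-term, hence tilting by Theorem~\ref{classify}.

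For the non-complete preprojective algebra over an arbitrary field, a standard descent argument reduces to the case above: silting, respectively tilting, complexes are preserved under passage to an algebraic closure and under completion at the arrow ideal, and these operations are faithful enough to transport the tilting property back to the original algebra. The main obstacle is the cone-disjointness statement in the second paragraph; once this silting-theoretic input is in hand, the affine Weyl chamber geometry of Section~3 combined with Theorem~\ref{classify} closes out the proof immediately.
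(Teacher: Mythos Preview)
Your approach has a genuine and fatal gap: the cone-disjointness statement you need for arbitrary silting complexes is simply false. For any silting complex $T$ one has $[T[2]]=[T]$ in $K_{0}(\KKb(\proj\Lambda))$, so $C(T)=C(T[2])$ while $T\not\cong T[2]$. More generally, all the shifts $T[2k]$ share the same cone. Thus the map $T\mapsto C(T)$ is never injective on $\silt\Lambda$, and the argument you sketch cannot separate $T$ from its even shifts. Consequently the intermediate claim you aim for, that every silting complex of $\Lambda$ is two-term up to isomorphism, is also false: $\Lambda[2]$ is tilting but not two-term. The two-term hypothesis in Theorem~\ref{injective} is essential; it is what pins down the position of the complex along the ``shift direction'' that $K_{0}$ cannot see.

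The paper's proof (due to Iyama) goes by an entirely different route and never touches the chamber geometry. One uses that the center $R$ of $\Lambda$ is a simple singularity of dimension $2$ and that $\Lambda$ is a module-finite $R$-algebra with $\RHom_{R}(\Lambda,\omega)\simeq\Lambda$ as bimodules (a symmetric $R$-order / singular Calabi--Yau condition). For any such algebra and any silting complex $T$, setting $\EE=\RHom_{\Lambda}(T,T)$ one has $\EE\in\DD^{\le 0}(R)$ by the silting condition, while the duality gives $\EE\simeq\RHom_{R}(\EE,\omega)\in\DD^{\ge 0}(R)$; hence $\EE$ is concentrated in degree $0$ and $T$ is tilting. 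This argument works over an arbitrary base field, handles both the complete and non-complete preprojective algebra uniformly, and is independent of Theorem~\ref{classify}.
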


To prove this, we need the following generalization of a well-known fact that all silting complexes of a finite dimensional symmetric algebras over a field are tilting.

\begin{prop}\label{silt=tilt}
	Let $R$ be a commutative Cohen-Macaulay ring with canonical module $\omega$, and $\Lambda$ a module-finite $R$-algebra such that
	\begin{equation}\label{sCY}
	\RHom_R(\Lambda,\omega)\simeq\Lambda
	\end{equation}
	in the derived category of $\Lambda$-bimodules. Then all silting complexes of $\Lambda$ are tilting.
\end{prop}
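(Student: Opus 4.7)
The plan is to use the sCY condition $\RHom_R(\Lambda,\omega)\simeq\Lambda$ to establish a Serre-type duality on $\KKb(\proj\Lambda)$, then exploit it to show that the endomorphism complex $\RHom_\Lambda(T,T)$ of any silting complex $T$ is self-dual under $\RHom_R(-,\omega)$. The silting condition will force this complex to have cohomology in non-positive degrees, while the self-duality will force the cohomology into non-negative degrees; so it must be concentrated in degree $0$, making $T$ tilting.

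First I would identify the two dualities $D_R := \RHom_R(-,\omega)$ and $D_\Lambda := \RHom_\Lambda(-,\Lambda)$ on perfect complexes. Both are exact additive functors commuting with finite direct sums, and they agree on $\Lambda$ by the hypothesis \eqref{sCY}. Therefore $D_R \simeq D_\Lambda$ as functors on $\KKb(\proj\Lambda)$, and analogously on $\KKb(\proj\Lambda^{\op})$. From this I would derive the Serre-type identity: for $X, Y \in \KKb(\proj\Lambda)$,
\[
\RHom_R(\RHom_\Lambda(X, Y), \omega) \simeq \RHom_\Lambda(Y, X).
\]
The derivation rewrites $\RHom_\Lambda(X,Y) \simeq D_\Lambda(X) \Lotimes_\Lambda Y$ by perfectness of $X$, applies $\RHom_R(-,\omega)$ with the tensor-Hom adjunction to produce $\RHom_\Lambda(Y, D_R D_\Lambda(X))$, and uses the identification above (now on $\KKb(\proj\Lambda^{\op})$) together with biduality for perfect complexes to obtain $D_R D_\Lambda(X) \simeq D_\Lambda D_\Lambda(X) \simeq X$.

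Now for a silting complex $T$, set $C := \RHom_\Lambda(T,T)$, which is a bounded complex of finitely generated $R$-modules since $T$ is perfect over $\Lambda$ and $\Lambda$ is module-finite over $R$. The silting condition gives $H^i(C) = 0$ for $i > 0$, and specializing the identity above to $X = Y = T$ yields $\RHom_R(C, \omega) \simeq C$. The hypercohomology spectral sequence
\[
E_2^{p,q} = \Ext^p_R(H^{-q}(C),\omega) \Rightarrow H^{p+q}(\RHom_R(C,\omega))
\]
has $p \geq 0$ always and $q \geq 0$ by the silting vanishing, so $H^n(\RHom_R(C,\omega)) = 0$ for $n < 0$. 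The self-duality then forces $H^i(C) = 0$ for $i < 0$ as well. Combined with the silting vanishing in positive degrees, $C$ is concentrated in degree $0$, i.e., $\Hom_{\KKb(\proj\Lambda)}(T, T[i]) = 0$ for all $i \neq 0$, so $T$ is tilting.

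The delicate step will be the Serre-type duality: one must track left/right $\Lambda$-module structures through the tensor-Hom adjunction and apply the identification $D_R \simeq D_\Lambda$ on both one-sided homotopy categories in the right way. Everything else (boundedness of $C$, convergence of the spectral sequence since $\omega$ has finite injective dimension, biduality for perfect complexes) is formal once that identification is pinned down, and the final implication ``silting $\Rightarrow$ tilting'' is essentially automatic from the symmetric nature of the sCY condition.
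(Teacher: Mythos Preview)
Your proposal is correct and follows essentially the same route as the paper: establish the Serre-type duality $\RHom_R(\RHom_\Lambda(X,Y),\omega)\simeq\RHom_\Lambda(Y,X)$, specialize to $X=Y=T$ to obtain self-duality of $\EE=\RHom_\Lambda(T,T)$ under $\RHom_R(-,\omega)$, and combine the silting condition $\EE\in\DD^{\le0}(R)$ with the flip $\RHom_R(\EE,\omega)\in\DD^{\ge0}(R)$ to force $\EE$ into degree~$0$. The only difference is presentational: the paper cites \cite[Proposition~3.5]{IR1} for the duality and asserts the t-structure flip directly, whereas you derive both by hand (via $D_R\simeq D_\Lambda$ on perfect complexes and a hypercohomology spectral sequence).
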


Notice that the condition \eqref{sCY} is equivalent to the following conditions.
\begin{itemize}
	\item $\Hom_R(\Lambda,\omega)\simeq\Lambda$ as $\Lambda$-bimodules.
	\item $\Ext^i_R(\Lambda,\omega)=0$ for all $i>0$, that is, $\Lambda$ is a maximal Cohen-Macaulay $R$-module.
\end{itemize}

\begin{remk}
	When $R$ is a Gorenstein ring and $\omega=R$, an $R$-algebra $\Lambda$ satisfying \eqref{sCY} is called a \emph{symmetric $R$-order}. This is a special case of a \emph{singular Calabi-Yau algebra} \cite{IW1} (=Calabi-Yau$^-$ algebra \cite{IR1}).
	
	Under certain technical conditions on $R$, the condition \eqref{sCY} is equivalent to being a \emph{bimodule} \emph{Calabi-Yau algebra}  \cite[Theorem 7.2.14]{Gi}.
\end{remk}

We start with proving Proposition \ref{silt=tilt}.

\begin{proof}[Proof of Proposition \ref{silt=tilt}]
	Let $T$ be a silting complex of $\La$, and $\EE=\RHom_{\La}(T,T)$. Then $\EE\in\DD^{\le0}(R)$ holds, where $(\DD^{\le0}(R),\DD^{\ge0}(R))$ is the standard t-structure of the derived category $\DD(R)$ of the category of $R$-modules.
	On the other hand, by our assumption \eqref{sCY}, there is a functorial isomorphism
	\[\RHom_\Lambda(X,Y)\simeq\RHom_R(\RHom_\Lambda(Y,X),\omega)\]
	in $\DD(R)$ for all $X\in\DD(\Lambda)$ and $Y\in\KKb(\proj\Lambda)$, see \cite[Proposition 3.5(2)(3)]{IR1}.
	Evaluating $X=Y=T$, we obtain an isomorphism
	\[\EE\simeq\RHom_R(\EE,\omega)\]
	in $\DD(R)$.
	Since $\EE\in\DD^{\le0}(R)$, we have $\RHom_R(\EE,\omega)\in\DD^{\ge0}(R)$. Thus $\EE\in\mod R$ holds, that is, $T$ is a tilting complex.
\end{proof}

Now we are able to prove Proposition \ref{silt=tilt2}.

\begin{proof}
	It is well-known that the center $R$ of $\Lambda$ is a simple singularity in dimension 2, and $\Lambda$ is a module-finite $R$-algebra satisfying \eqref{sCY} \cite{GL1,GL2}. Thus the assertion follows from Proposition \ref{silt=tilt}.
\end{proof}



\begin{thebibliography}{}

\bibitem[AIR]{AIR} T.~Adachi, O. Iyama, I. Reiten, \emph{$\tau$-tilting theory},
Compos. Math. 150 (2014), no. 3, 415--452.

\bibitem[AIRT]{AIRT} C. Amiot, O. Iyama, I. Reiten, G. Todorov, \emph{Preprojective algebras and c-sortable words}, Proc. Lond. Math. Soc. 104 (2012), no. 3, 513--539. 


\bibitem[AI]{AI} T. Aihara, O. Iyama,  \emph{Silting mutation in triangulated categories}, J. Lond. Math. Soc. 85 (2012), no. 3, 633--668.

\bibitem[AM]{AM} T. Aihara, Y. Mizuno, \emph{Classifying tilting complexes over preprojective algebras
of Dynkin type}, Algebra Number Theory 11 (2017), no. 6, 1287--1315. 

\bibitem[AF]{AF}
F.W. Anderson, K.R. Fuller,
\emph{Rings and Categories of Modules},
Graduate Texts in Mathematics, Vol. 13. Springer-Verlag, New York-Heidelberg, 1974.

\bibitem[A]{A} S. Asai, \emph{The wall-chamber structures of the real Grothendieck groups}, arXiv:1905.02180.

\bibitem[BB]{BB}
A.~Bj{\"o}rner, F.~Brenti, \emph{Combinatorics of {C}oxeter groups},
  Graduate Texts in Mathematics, vol. 231, Springer, New York, 2005.

\bibitem[BST]{BST} 
 T.~Br\"ustle, D. Smith, H. Treffinger, \emph{Wall and chamber structure for finite-dimensional algebras}, Adv. Math. 354 (2019), 106746.

\bibitem[BY]{BY} T.~Br\"ustle, D.~Yang, \emph{Ordered exchange graphs}. Advances in representation theory of algebras, 135--193, EMS Ser. Congr. Rep., Eur. Math. Soc., Zurich, 2013.

 
\bibitem[BIRS]{BIRS}
A.~B. Buan, O.~Iyama, I.~Reiten, J.~Scott, \emph{Cluster structures for 2-{C}alabi-{Y}au categories and unipotent groups}, Compos. Math. 145 (2009), no. 4, 1035--1079.

\bibitem[CR]{CR}
C. W. Curtis, I. Reiner,
\emph{Methods of representation theory. Vol. I. With applications to finite groups and orders},
Pure and Applied Mathematics, John Wiley \& Sons, Inc., New York, 1981.

\bibitem[DIJ]{DIJ}
L. Demonet, O. Iyama, G. Jasso, \emph{$\tau$-tilting finite algebras, bricks and $g$-vectors},
Int. Math. Res. Not. IMRN 2019, no. 3, 852--892.

\bibitem[Ga]{Ga}
P. Gabriel,
\emph{Des cat\'egories ab\'eliennes},
Bull. Soc. Math. France, 90, 1962 323--448.

\bibitem[GL1]{GL1} W. Geigle, H. Lenzing, \emph{A class of weighted projective curves arising in representation theory of finite-dimensional algebras},
Singularities, representation of algebras, and vector bundles (Lambrecht, 1985), 265--297, Lecture Notes in Math., 1273, Springer, Berlin, 1987.

\bibitem[GL2]{GL2} W. Geigle, H. Lenzing, \emph{Perpendicular categories with applications to representations and sheaves}, J. Algebra 144 (1991), no. 2, 273--343.

\bibitem[GLS]{GLS} Geiss, B. Leclerc, J. Schr\"{o}er, \emph{Kac-Moody groups and cluster algebras}, Adv. Math. 228 (2011), no. 1, 329--433.

\bibitem[Gi]{Gi} V. Ginzburg, \emph{Calabi-Yau algebras}, arXiv:math/0612139.

\bibitem[HU]{HU}
D. Happel, L. Unger,
\emph{On a partial order of tilting modules},
Algebr. Represent. Theory 8 (2005), no. 2, 147--156.

\bibitem[Hi]{Hi}
L. Hille,
\emph{On the Volume of a Tilting Module},
Abh. Math. Sem. Univ. Hamburg 76 (2006), 261--277.

\bibitem[Hu]{Hu}
J. E. Humphreys, \emph{Reflection groups and Coxeter groups}, Cambridge Studies in Advanced Mathematics, 29. Cambridge University Press, Cambridge, 1990.
  
\bibitem[IR1]{IR1} O. Iyama, I. Reiten, \emph{Fomin-Zelevinsky mutation and tilting modules over 
Calabi-Yau algebras}, Amer. J. Math. 130 (2008), no. 4, 1087--1149.

\bibitem[IR2]{IR2} O. Iyama, I. Reiten, \emph{Introduction to $\tau$-tilting theory}, Proc. Natl. Acad. Sci. USA 111 (2014), no. 27, 9704--9711.

\bibitem[IRRT]{IRRT} O. Iyama, N. Reading, I. Reiten, H. Thomas, \emph{Lattice structure of Weyl groups via representation theory of preprojective algebras}, Compos. Math. 154 (2018), no. 6, 1269--1305.

\bibitem[IW1]{IW1} O. Iyama, M. Wemyss, \emph{Maximal modifications and Auslander-Reiten duality for non-isolated singularities}, Invent. Math. 197 (2014), no. 3, 521--586.

\bibitem[IW2]{IW2} O. Iyama, M. Wemyss,
\emph{Tits cones intersections, contracted preprojective algebras, and affine actions on 3-fold flops},
in preparation.


\bibitem[KeY]{KeY}B. Keller, D. Yang,
\emph{Derived equivalences from mutations of quivers with potential},
Adv. Math. 226 (2011), no. 3, 2118--2168.


\bibitem[Kr]{Krause}
H. Krause,
\emph{Krull-Schmidt categories and projective covers},
Expo. Math. 33 (2015), no. 4, 535--549.

\bibitem[Ki1]{Ki1} Y. Kimura, \emph{Tilting theory of preprojective algebras and $c$-sortable elements},
J. Algebra 503 (2018), 186--221. 

\bibitem[Ki2]{Ki2} Y. Kimura, \emph{Tilting and cluster tilting for preprojective algebras and Coxeter groups},
Int. Math. Res. Not. IMRN 2019, no. 18, 5597--5634.

\bibitem[L]{L} 
B. Leclerc, \emph{Cluster  structures  on  strata  of  flag  varieties}, 
Adv. Math. 300 (2016), 190--228.

\bibitem[M1]{M1}
Y. Mizuno, \emph{Classifying $\tau$-tilting modules over preprojective algebras of Dynkin type},
Math. Z. 277 (2014), no. 3-4, 665--690.  
 
\bibitem[M2]{M2}
Y. Mizuno, \emph{Derived Picard groups of preprojective algebras of Dynkin type},
Int. Math. Res. Not., rny299, https://doi.org/10.1093/imrn/rny299.
 
\bibitem[P]{P}
P. Plamondon,
\emph{Cluster algebras via cluster categories with infinite-dimensional morphism spaces},
Compos. Math. 147 (2011), no. 6, 1921--1954.


\bibitem[RS]{RS}
C. Riedtmann, A. Schofield,
\emph{On a simplicial complex associated with tilting modules},
Comment. Math. Helv. 66 (1991), 70--78.

\bibitem[SY]{SY}
Y. Sekiya, K. Yamaura,\emph{Tilting theoretical  approach  to moduli  spaces  over preprojectivealgebras}, Algebr. Represent. Theory 16 (2013), no. 6, 1733--1786.


\bibitem[VdB]{VdB}
M. Van den Bergh,
\emph{Calabi-Yau algebras and superpotentials},
Selecta Math. (N.S.) 21 (2015), no. 2, 555--603.

\bibitem[Y]{Y}
T. Yurikusa,
\emph{Wide subcategories are semistable}, 
Doc. Math. 23 (2018), 35--47.

\end{thebibliography}
\end{document}